\numberwithin{equation}{section}
\def\p{\partial}
\def\o{\overline}
\def\b{\bar}
\def\mb{\mathbb}
\def\mc{\mathcal}
\def\n{\nabla}
\newtheorem{thm}{Theorem}[section]
\newtheorem{lemma}[thm]{Lemma}
\newtheorem{prop}[thm]{Proposition}
\newtheorem{cor}[thm]{Corollary}
\theoremstyle{definition}
\newtheorem{rem}[thm]{Remark}
\theoremstyle{definition}
\newtheorem{defn}[thm]{Definition}
\newcommand{\comment}[1]{}
\begin{document}

\title{Geodesic-Einstein metrics and nonlinear stabilities}

\author[Huitao Feng]{Huitao Feng$^1$}
\author[Kefeng Liu]{Kefeng Liu$^2$}
\author[Xueyuan Wan]{Xueyuan Wan}
\address{Huitao Feng: Chern Institute of Mathematics \& LPMC,
Nankai University, Tianjin, China}
\email{fht@nankai.edu.cn}

\address{Kefeng Liu, Department of Mathematics,
Capital Normal University, Beijing, 100048, China;
Department of Mathematics, University of California at Los Angeles, California 90095, USA}
\email{liu@math.ucla.edu}

\address{Xueyuan Wan: Mathematical Sciences, Chalmers University of Technology, University of Gothenburg, 41296 Gothenburg, Sweden}
\email{xwan@chalmers.se}

\thanks{$^1$~Partially supported by NSFC(Grant No. 11221091, 11271062, 11571184) and the Fundamental Research Funds for the Central Universities}
\thanks{$^2$~Partially supported by NSF (Grant No. 1510216);}

\begin{abstract}
In this paper, we introduce notions of nonlinear stabilities for a relative ample line bundle over a holomorphic fibration and define the notion of a geodesic-Einstein metric on this line bundle, which generalize the classical stabilities and Hermitian-Einstein metrics of holomorphic vector bundles. We introduce a Donaldson type functional and show that this functional attains its absolute minimum at geodesic-Einstein metrics, and we also discuss the relations between the existence of geodesic-Einstein metrics and the nonlinear stabilities of the line bundle. 
 As an application, we will prove that a holomorphic vector bundle admits a Finsler-Einstein metric if and only if it admits a  Hermitian-Einstein metric, which answers a problem posed by S. Kobayashi.
 \end{abstract}
\maketitle
\tableofcontents

%%%%%%%%%%%%%%%%%%%%%%%%%%%%%%%%%%%%%%%%%%%
%%%%%%%%%%%%%%%%%%%%%%%%%%%%%%%%%%%%%%%%%%%

\section*{Introduction} \label{s0}
%%%%%%%%%%%%%%%%%%%%%%%%%%%%%%%%%%%%%%%%%%%

In this paper, we study the triple $({\mc X},M,L)$, where $\pi:\mc{X}\to M$ is a holomorphic fibration with $\dim M=m$ and $\dim\mc{X}=m+n$, which means that $\pi:\mc{X}\to M$ is a proper surjective holomorphic mapping between complex manifolds $\mc{X}$ and $M$ whose differential has maximal rank everywhere such that every fiber is a compact complex manifold, and $L\to \mc{X}$ is a relative ample line bundle over $\mc X$, i.e. there exists a metric $\phi$ (more precisely, $e^{-\phi}$ is a metric)  on $L$ such that
$\sqrt{-1}\p\b{\p}_{\mc{X}/M}\phi>0$ fiberwise. Such a metric $\phi$ is called an admissible metric on $L$. We always assume in this paper that $\mc X$ is compact and $M$ is a compact K\"{a}hler manifold with a fixed K\"{a}hler form $\omega$.

For any admissible metric $\phi$ on $L$, the geodesic curvature $c(\phi)$ of $\phi$, which is a horizontal $(1,1)$ form on $\mc X$, is defined by (cf. \cite{Choi}, Definition 2.1):
\begin{align}\label{phi}
  c(\phi)=\left(\phi_{\alpha\b{\beta}}-\phi_{\alpha\b{l}}\phi^{k\b{l}}\phi_{k\b{\beta}}\right)\sqrt{-1}dz^{\alpha}\wedge d\b{z}^{\beta},
\end{align}
here the notations $\phi_{\alpha\b{\beta}}$, $\phi_{\alpha\b{l}}$ and $\phi^{k\b{l}}$ are defined in the Section \ref{section1}.

The geodesic curvature $c(\phi)$ plays an important role in many aspects (see, e.g.  \cite{Berm}, \cite{Choi}, \cite{Sch}, \cite{Wang}).
For the case of canonically polarized family, i.e., each fiber with $c_1<0$, the unique K\"ahler-Einstein metric on each fiber defines a  metric $\phi$ on the relative canonical bundle $K_{\mc{X}/M}$. By proving the positivity of geodesic curvature $c(\phi)$ along any curve, Schumacher \cite{Sch} proved that $K_{\mc{X}/M}$ is a positive line bundle if the family is nowhere infiniesimally trivial. 

When $\dim M=1$, the equation $c(\phi)=0$ is equivalent to the famous  homogenous complex Monge-Amp\`{e}re equation
$(\sqrt{-1}\p\b{\p}\phi)^{n+1}=0$ (cf. \cite{Don3}, \cite{Semmes}), which plays a crucial role in a lot of related important problems.
Note that in this case, the equation $c(\phi)=0$ can be also written as $tr_{\omega}c(\phi)=0$ for any metric $\omega$ on $M$. Inspired by this, for a general holomorphic fibration $\pi:\mc{X}\to M$ over a compact K\"{a}hler manifold $(M,\omega=\sqrt{-1}g_{\alpha\b{\beta}}dz^{\alpha}\wedge d\b{z}^{\beta})$, we introduce the notion of a geodesic-Einstein metric on $L$ with respect to $\omega$. We say that an admissible metric $\phi$ on $L$ is geodesic-Einstein with respect to $\omega$ if
\begin{align}
tr_{\omega}c(\phi)=g^{\alpha\b{\beta}}\left(\phi_{\alpha\b{\beta}}-\phi_{\alpha\b{l}}\phi^{k\b{l}}\phi_{k\b{\beta}}\right)=\lambda,
\end{align}
 where $\lambda$ is a constant.

In this paper, we mainly study the relations between the existence of geodesic-Einstein metrics on $L$ and certain notions of stability of $L$.

In Section 1, we first introduce the following Donaldson type functional $\mc{L}$ on the space $F^+(L)$ of admissible metrics on $L$: for any fixed $\psi\in F^+(L)$ and any $\phi\in F^+(L)$
 \begin{align}\label{F}
    \mc{L}(\phi,\psi)=\int_{M}\left(\frac{\lambda}{m}\mc{E}(\phi,\psi)\wedge\omega-\frac{1}{n+1}\mc{E}_1(\phi,\psi)\right)\frac{\omega^{m-1}}{(m-1)!},
  \end{align}
where $\mc{E}$ and $\mc{E}_1$ are given by (\ref{E0}) and (\ref{E1}) in this paper. This functional can be viewed as a generalization  of the famous Donaldson functional in the family case. By computing the first variation of the functional (\ref{F}), we can show that the critical points of this functional $\mc{L}$ coincide with the geodesic-Einstein metrics on $L$ (see Proposition \ref{prop2}). Moreover, by using X. Chen's geodesic approximation lemma (cf. \cite{Chen1}, Lemma 7; also \cite{FLW1}, Lemma 2.3), we get the following theorem:
\begin{thm}
  The  functional $\mc{L}(\cdot,\psi)$ attains its absolute minimum at the geodesic-Einstein metrics on $L$.
\end{thm}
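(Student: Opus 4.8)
The plan is to show that $\mc{L}(\cdot,\psi)$ is convex along geodesics in the space $F^+(L)$ and to combine this with the first variation of Proposition \ref{prop2}: convexity together with the vanishing of the first derivative at a geodesic-Einstein metric forces such a metric to be an absolute minimum. First I would make precise the geodesics in $F^+(L)$. Given $\phi_0,\phi_1\in F^+(L)$, one complexifies the parameter $t\in[0,1]$ to $\tau=t+\sqrt{-1}s$ on an annulus $A$ and seeks an $S^1$-invariant admissible metric $\Phi$ on the pullback of $L$ to $\mc{X}\times A$ with $\Phi|_{t=0}=\phi_0$, $\Phi|_{t=1}=\phi_1$, whose geodesic curvature relative to the enlarged base $M\times A$ vanishes; this is the homogeneous Monge--Amp\`ere equation in the fibre and time directions, reading fibrewise $\ddot\phi_t-|\b\p_{\mc{X}/M}\dot\phi_t|^2_{c(\phi_t)}=0$. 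The restrictions $\phi_t$ then define the geodesic joining $\phi_0$ to $\phi_1$.

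Next I would record the first and second variations of $f(t):=\mc{L}(\phi_t,\psi)$ along an arbitrary smooth path. By the construction of $\mc{E}$ and $\mc{E}_1$ the derivative $f'(t)$ is independent of the reference $\psi$, and, by Proposition \ref{prop2}, has the shape (up to a positive normalization)
\begin{equation*}
f'(t)=-\int_{M}\Big(\int_{X_z}\dot\phi_t\,(tr_{\omega}c(\phi_t)-\lambda)\,\frac{(\sqrt{-1}\p\b\p_{\mc{X}/M}\phi_t)^{n}}{n!}\Big)\frac{\omega^{m}}{m!},
\end{equation*}
so that $f'(t)$ vanishes for all variations exactly when $\phi_t$ is geodesic-Einstein. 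Differentiating once more and integrating by parts along the fibres, the terms carrying $\ddot\phi_t$ combine with the time-derivative of the fibre volume into the geodesic operator applied to $\phi_t$; on a geodesic this part vanishes, and what remains is a manifestly non-negative expression built from $\b\p_{\mc{X}/M}\dot\phi_t$. Hence $f''(t)\ge 0$ and $f$ is convex along geodesics.

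To conclude, using the cocycle property $\mc{L}(\phi_1,\psi)-\mc{L}(\phi_0,\psi)=\mc{L}(\phi_1,\phi_0)$ that follows from the definitions of $\mc{E},\mc{E}_1$, it suffices to fix a geodesic-Einstein metric $\phi_0$, take $\psi=\phi_0$, join $\phi_0$ to an arbitrary $\phi_1$ by a geodesic, and set $g(t)=\mc{L}(\phi_t,\phi_0)$. Then $g(0)=0$, $g'(0)=0$ since $\phi_0$ is geodesic-Einstein, and $g$ is convex, whence $g(1)\ge 0$, i.e. $\mc{L}(\phi_1,\psi)\ge\mc{L}(\phi_0,\psi)$.

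The main obstacle is regularity: the homogeneous Monge--Amp\`ere equation defining geodesics is degenerate, so geodesics exist only as $C^{1,1}$ objects and $g''(t)$ cannot be formed directly. This is exactly where X. Chen's geodesic approximation lemma enters. I would replace the genuine geodesic by smooth $\varepsilon$-geodesics $\phi^\varepsilon_t$ solving the equation with right-hand side $\varepsilon$ times the fibre volume form, for which the computation above gives $g_\varepsilon''(t)\ge -C\varepsilon$, so that $g_\varepsilon(t)+\tfrac12 C\varepsilon t^2$ is convex. Since $\phi^\varepsilon_t\to\phi_t$ in $C^{1,1}$ and $\mc{L}$ together with its first derivative are continuous in this topology, letting $\varepsilon\to0$ recovers the convexity of $g$ and the identity $g'(0)=0$, completing the argument.
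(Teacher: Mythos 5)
Your overall strategy coincides with the paper's: connect a geodesic-Einstein metric $\phi_0$ to an arbitrary $\phi_1$ by Chen's smooth $\epsilon$-geodesics, show that the second derivative of $\mc{L}$ along them is bounded below by $-C\epsilon$, use the vanishing of the first variation at the geodesic-Einstein endpoint, and let $\epsilon\to 0$. The genuine gap is the step where you assert that ``the computation above gives $g_\varepsilon''(t)\ge -C\varepsilon$.'' Along an $\epsilon$-geodesic the second variation reduces to
$$\frac{d^2}{dt^2}\mc{L}(\phi_{t,\epsilon},\psi)\ \ge\ -\epsilon\int_{\mc{X}}\bigl(tr_{\omega}c(\phi_{t,\epsilon})-\lambda\bigr)(\sqrt{-1}\p\b{\p}\psi)^n\frac{\omega^m}{m!},$$
and you need the integral on the right to be bounded by a constant independent of both $t$ and $\epsilon$. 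This is not automatic: $tr_{\omega}c(\phi_{t,\epsilon})$ involves second derivatives of $\phi_{t,\epsilon}$, whereas the approximation lemma, as used in the paper, only supplies a uniform $C^0$ bound $|\phi_{t,\epsilon}-\psi|<C$. The paper closes this by observing that the second-fundamental-form term in $c(\phi)$ is non-positive, so $tr_{\omega}c(\phi_{t,\epsilon})\le g^{\alpha\b{\beta}}(\phi_{t,\epsilon})_{\alpha\b{\beta}}$, and then integrating by parts against a partition of unity and the fixed smooth density $(\sqrt{-1}\p\b{\p}\psi)^n\omega^m$, which moves all derivatives onto fixed smooth data and leaves only the $C^0$ bound to be invoked. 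Without some such uniform estimate the constant $C$ could a priori degenerate as $\epsilon\to0$, and the argument collapses.

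Two smaller points. First, in your second-variation discussion the manifestly non-negative remainder is $|\p^H\dot{\phi}_t|^2_{\omega}$, i.e.\ the \emph{horizontal} derivative of $\dot{\phi}_t$; the fibrewise term $|\p^V\dot{\phi}_t|^2_{\phi_t}$ sits inside the geodesic operator, not in the positive remainder, so ``built from $\b{\p}_{\mc{X}/M}\dot{\phi}_t$'' is not the right identification. Second, the final limit $\epsilon\to0$ is cleaner if you avoid passing to the $C^{1,1}$ geodesic altogether: apply Taylor's theorem to the smooth function $t\mapsto\mc{L}(\phi_{t,\epsilon},\psi)$ at fixed $\epsilon$, obtaining $\mc{L}(\phi_1,\psi)\ge\mc{L}(\phi_0,\psi)-\epsilon C_1/2$ with the endpoints fixed for all $\epsilon$, and then let $\epsilon\to0$. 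This sidesteps the delicate question of whether $\mc{L}$ and its first derivative are continuous along the degenerate limit geodesic, which your proposal asserts but does not justify.
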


The famous Donaldson-Uhlenbeck-Yau theorem reveals the deep relationship between the stability of a holomorphic vector bundle and the existence of Hermitian-Einstein metrics (cf. \cite{Nara}, \cite{Don0}, \cite{Don1}, \cite{Don2}, \cite{Yau}). In Section 2, we introduce the notions of the nonlinear semistability (stability) and the nonlinear polystablity associated to a triple $({\mc X},M,L)$ and discuss the relationships between the existence of geodesic-Einstein metrics on $L$ and these stabilities. We have
\begin{thm}\label{T} For a triple $(\mc{X},M,L)$, if $L$ admits a geodesic-Einstein metric, then the triple $(\mc{X},M,L)$ is nonlinear semistable and nonlinear polystable.
\end{thm}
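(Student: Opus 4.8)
The plan is to reproduce, in the present nonlinear family setting, the ``easy'' direction of the Donaldson--Uhlenbeck--Yau theorem, namely the implication that the existence of a Hermitian--Einstein metric forces polystability. Let $\phi\in F^+(L)$ be a geodesic-Einstein metric, so that $tr_\omega c(\phi)=\lambda$. Tracing this equation and integrating over $M$ shows that $\lambda$ is a cohomological quantity independent of $\phi$, namely the appropriately normalized total slope $\mu(\mc{X})$ of the triple $(\mc{X},M,L)$. Both conclusions will be extracted from a single slope inequality for competing subobjects together with an analysis of its equality case.

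For nonlinear semistability I would take an arbitrary competing nonlinear subobject, which I expect to be realized by a holomorphic subfibration $\mc{Y}\subset\mc{X}$ carrying the restricted relative ample bundle $L|_{\mc{Y}}$. The restriction of $\phi$ induces an admissible metric on $L|_{\mc{Y}}$, and integrating the geodesic curvature fiberwise against the relative Monge--Amp\`ere volume form should produce a Chern--Weil type identity of the shape
\begin{align*}
\mu(\mc{Y})=\frac{1}{\operatorname{vol}(\mc{Y})}\int_{\mc{Y}}tr_\omega c(\phi)\,dV_{\mc{Y}}-\frac{1}{\operatorname{vol}(\mc{Y})}\int_{\mc{Y}}|\mathrm{II}|^{2}\,dV_{\mc{Y}},
\end{align*}
where $\mathrm{II}$ is a second-fundamental-form type tensor measuring the failure of $\mc{Y}$ to be geodesically parallel inside $(\mc{X},\phi)$, and the last term is manifestly nonnegative. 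Substituting the geodesic-Einstein equation $tr_\omega c(\phi)=\lambda$ collapses the first term to $\lambda$, yielding $\mu(\mc{Y})\le\lambda=\mu(\mc{X})$, which is exactly nonlinear semistability.

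For nonlinear polystability I would examine the equality case. By the identity above, $\mu(\mc{Y})=\mu(\mc{X})$ forces $\mathrm{II}\equiv 0$, i.e.\ $\mc{Y}$ is totally geodesic for $\phi$; this should split $(\mc{X},L)$ orthogonally as a fiberwise direct sum $\mc{Y}\oplus\mc{Y}^{\perp}$ of admissible pieces, each inheriting a geodesic-Einstein metric of the same slope $\lambda$. Iterating on $\mc{Y}^{\perp}$ terminates after finitely many steps and exhibits $(\mc{X},M,L)$ as a direct sum of geodesic-Einstein subobjects of equal slope, which is nonlinear polystability. Semistability can also be seen from the functional side, as an alternative: a destabilizing subobject produces a geodesic ray in $F^+(L)$ along which the asymptotic slope of $\mc{L}(\cdot,\psi)$ equals a positive multiple of $\mu(\mc{Y})-\mu(\mc{X})$, and since $\phi$ realizes the absolute minimum of $\mc{L}(\cdot,\psi)$ this slope must be nonnegative.

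The main obstacle will be establishing the Chern--Weil identity and isolating the nonnegative term $|\mathrm{II}|^{2}$ in the nonlinear regime. Unlike the vector bundle case, here both the fiber geometry and the integrating volume form $dV_{\mc{Y}}$ depend on $\phi$, so one cannot merely trace a curvature endomorphism; the computation must differentiate through the fiber integration, and the resulting curvature-type error terms have to be reorganized into a perfect square. A secondary technical point, precisely as in the classical theory, is that a destabilizing subobject may only be defined away from a proper analytic subset of $M$ or $\mc{X}$, so one must verify that the fiber integrations converge and that the associated boundary contributions vanish across this locus.
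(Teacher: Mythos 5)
Your semistability argument is, in outline, the paper's own: the paper restricts $\phi$ to the subfibration $\mc{Y}$ and proves the \emph{pointwise} comparison $c(\phi|_{\mc{Y}})-c(\phi)|_{\mc{Y}}=\bigl(\phi_{\alpha\b{b}}\phi^{\b{b}a}\phi_{a\b{\beta}}-\phi_{\alpha\b{j}}\phi^{\b{j}k}\phi_{k\b{\beta}}\bigr)\sqrt{-1}\,dz^{\alpha}\wedge d\b{z}^{\beta}\geq 0$, where the right-hand side is exhibited as a perfect square by an elementary Schur-complement identity for the block Hessian of $\phi$ along the fibers (Lemma 2.4 in the paper). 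That Schur complement is precisely your $|\mathrm{II}|^{2}$ term, so the ``main obstacle'' you flag is real but is resolved by linear algebra alone --- no differentiation through the fiber integration is needed, and no boundary analysis along the bad locus $S$ beyond the standard codimension-$\geq 2$ Stokes argument that makes $\lambda_{\mc{Y},L}$ well defined. One caveat: with the paper's conventions the inequality comes out as $\lambda_{\mc{Y},L}\geq\lambda_{\mc{X},L}$ (the second-fundamental-form term \emph{adds} to the restricted geodesic curvature), which is the paper's definition of nonlinear semistability; your $\mu(\mc{Y})\leq\mu(\mc{X})$ is consistent only after the sign flip $\mu\sim-\lambda$ that appears when one passes to slopes of subsheaves, so you should fix a convention and check the direction.

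The polystability half has a genuine gap. You propose that equality forces $\mathrm{II}\equiv 0$ and hence an orthogonal splitting $\mc{X}\cong\mc{Y}\oplus\mc{Y}^{\perp}$; but the fibers here are compact complex manifolds, not vector spaces, and a complex submanifold $\mc{Y}_{p}\subset\mc{X}_{p}$ has no complementary subfibration $\mc{Y}^{\perp}$ to iterate on --- the vector-bundle picture $E=S\oplus S^{\perp}$ does not transplant. Correspondingly, the paper's notion of nonlinear polystability is not a direct-sum condition but a \emph{filtration} condition: one needs a chain $\mc{X}=\mc{X}_{0}\supset\mc{X}_{1}\supset\cdots\supset\mc{X}_{N}$ of maximal subfibrations with $\lambda_{\mc{X}_{i},L}=\lambda_{\mc{X}_{i-1},L}$, the last one nonlinear stable, together with metrics satisfying $c(\phi_{i-1})|_{\mc{X}_{i}}=c(\phi_{i-1}|_{\mc{X}_{i}})$. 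The paper gets this by observing that equality $\lambda_{\mc{X}_{1},L}=\lambda_{\mc{X},L}$ forces equality throughout the chain of inequalities, hence $c(\phi)|_{\mc{X}_{1}}=c(\phi|_{\mc{X}_{1}})$ and, crucially, that $\phi|_{\mc{X}_{1}}$ is again geodesic-Einstein on $L|_{\mc{X}_{1}}$; one then descends the filtration by induction inside $\mc{X}_{1}$ rather than passing to a complement. Your equality analysis ($\mathrm{II}\equiv 0$) is the right first step, but it must feed into this inductive restriction argument, not a splitting.
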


To get a full understanding of the relationships among the notions of the geodesic-Einstein metric, nonlinear semistability and the nonlinear polystability would be an interesting problem.
For example, we could ask whether there exists a geodesic-Einstein metric for a nonlinear polystable triple $(\mc{X},M,L)$.

As an application, in Section 3, we study the special triple $(P(E),M,\mc{O}_{P(E)}(1))$ associated to a holomorphic vector bundle $E\to M$. By the Kobayashi correspondence (cf. \cite{Ko1}, \cite{FLW}, \cite{FLW1}), a Finsler metric $G$ on $E$ induces a natural admissible metric on $\mc{O}_{P(E)}(1)$. In this case, we can prove that the induced metric on $\mc{O}_{P(E)}(1)$ is geodesic-Einstein if and only if $G$ is Finsler-Einstein. So for a Finsler-Einstein vector bundle $E\to M$, we know from Theorem \ref{T} that the associated triple $(P(E),M,\mc{O}_{P(E)}(1))$ is nonlinear semistable and  nonlinear polystable. Also recall that a Finsler-Einstein vector bundle $E\to M$ is semistable (cf. \cite{FLW1}). Here a natural question is whether
a Finsler-Einstein vector bundle admits a Hermitian-Einstein metric. 

 In \cite{Ko4} S. Kobayashi extended the concept of the Hermitian-Einstein metric to the setting of complex Finsler geometry, he introduced the definition of a Finsler-Einstein metric on a holomorphic vector bundle. Furthermore, S. Kobayashi raised in \cite{Ko4} the following open problem:
 
{\bf Problem:} \textsl{What are the algebraic geometric consequences of the Finsler-Einstein condition? The first question in this regard is whether every Einstein-Finsler vector bundle is semi-stable or not?}

By using the Berndtsson's construction of the $L^2$-metric on the direct image bundle
$E=\pi_*(K_{\mc{X}/M}+L)$ (cf. \cite{Bern2}), we can answer the problem completely. 
\begin{thm}
  $E$ admits a Finsler-Einstein metric if and only if $E$ admits a Hermitian-Einstein metric. Therefore, the existence of Finsler-Einstein metrics is equivalent to polystable of the holomorphic vector bundle.
\end{thm}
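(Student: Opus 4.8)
The plan is to route both implications through the geodesic-Einstein condition on $\mathcal{O}_{P(E)}(1)$, using the equivalence established earlier in this section (the Kobayashi correspondence): a Finsler metric $G$ on $E$ is Finsler-Einstein if and only if the induced admissible metric $\phi_G$ on $\mathcal{O}_{P(E)}(1)$ is geodesic-Einstein with respect to $\omega$. It therefore suffices to prove (a) a Hermitian-Einstein metric on $E$ induces a geodesic-Einstein metric on $\mathcal{O}_{P(E)}(1)$, and (b) conversely a geodesic-Einstein metric on $\mathcal{O}_{P(E)}(1)$ produces a Hermitian-Einstein metric on $E$. The closing assertion on polystability is then immediate from the Donaldson-Uhlenbeck-Yau theorem, since $M$ is compact K\"ahler.

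For the elementary direction (a), I would compute the geodesic curvature of the weight $\phi_h$ on $\mathcal{O}_{P(E)}(1)$ induced by a Hermitian metric $h$ on $E$ (a Hermitian metric being in particular a Finsler metric). The induced metric is the fibrewise dual Fubini-Study metric, and a standard computation of the curvature of the tautological subbundle shows that at $[v]\in P(E_x)$ the horizontal curvature is the Kobayashi curvature
\[
c(\phi_h)_{\alpha\bar\beta}([v]) \;=\; \frac{h\big(F_{\alpha\bar\beta}\,v,\,v\big)}{h(v,v)},
\]
$F$ being the curvature of the Chern connection of $h$. Tracing against $\omega$ gives $tr_\omega c(\phi_h)([v]) = h(\sqrt{-1}\,\Lambda_\omega F\,v,v)/h(v,v)$, so if $h$ is Hermitian-Einstein, $\sqrt{-1}\,\Lambda_\omega F=\lambda\,\mathrm{Id}$, this is the constant $\lambda$ for every $[v]$. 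Hence $\phi_h$ is geodesic-Einstein and, by the equivalence above, $h$ is Finsler-Einstein.

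The substance is direction (b), where I would use Berndtsson's $L^2$-metric on a direct image. A Finsler-Einstein $G$ yields, by the equivalence above, a geodesic-Einstein admissible metric on $\mathcal{O}_{P(E)}(1)$, so $tr_\omega c\equiv\lambda$. Taking $L=\mathcal{O}_{P(E)}(r+1)$ with $r=\mathrm{rank}\,E$, the relative Euler sequence gives $(K_{P(E)/M}+L)|_{X_x}=\mathcal{O}_{\mathbb{P}^{r-1}}(1)$ on each fibre $X_x=\pi^{-1}(x)$ (of dimension $n=r-1$), so $\hat E:=\pi_*(K_{P(E)/M}+L)$ has rank $r$ and is isomorphic to $E$ up to dualizing and twisting by a line bundle pulled back from $M$; since $c(\phi)$ is homogeneous of degree one, the weight on $L$ is again geodesic-Einstein, and after renaming we take $\phi$ to be a geodesic-Einstein weight on $L$ with $tr_\omega c(\phi)\equiv\lambda$. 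Equip $\hat E$ with the Berndtsson $L^2$-metric. For a local holomorphic section represented fibrewise by an $L$-valued $(n,0)$-form $u$, Berndtsson's curvature formula \cite{Bern2} reads, schematically,
\[
\big\langle\Theta_{\alpha\bar\beta}\,u,u\big\rangle \;=\; \int_{X_x} c(\phi)_{\alpha\bar\beta}\,|u|^2 e^{-\phi} \;+\; \big\langle (1+\Box)^{-1}\bar\partial^V\!u_\alpha,\ \bar\partial^V\!u_\beta\big\rangle,
\]
the sum of a geodesic-curvature term and a manifestly non-negative H\"ormander term. Contracting with $\omega$ and using $tr_\omega c(\phi)\equiv\lambda$, the first term collapses to $\lambda\,\|u\|^2_{L^2}$, whence
\[
tr_\omega\Theta \;=\; \lambda\cdot\mathrm{Id} \;+\; S, \qquad S\ge 0.
\]
Thus the $L^2$-metric is Hermitian-Einstein exactly when the correction $S$ vanishes; once it does, untwisting by a constant-central-curvature metric on the pulled-back line bundle (which always exists on a compact K\"ahler base, by solving a Laplace equation) and dualizing transfers the Hermitian-Einstein metric from $\hat E$ to $E$.

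The main obstacle is precisely the vanishing of $S$, i.e. showing that the extra H\"ormander term contributes nothing after tracing. I would attack this globally: integrating the endomorphism trace of the displayed identity against $\omega^m/m!$ yields, on the left, a topological quantity (the degree $\deg_\omega\hat E$ up to normalization, via Chern-Weil for $\det\hat E$), and on the right $\lambda\,r\,\mathrm{Vol}(M)+\int_M\mathrm{Tr}\,S$. A Chern-Weil computation---fibre-integrating powers of $c_1(\mathcal{O}_{P(E)}(1))$ and invoking the geodesic-Einstein identity---should identify the geodesic-Einstein constant $\lambda$ with the slope of $\hat E$, forcing $\int_M\mathrm{Tr}\,S=0$; as $S\ge 0$ this gives $S\equiv 0$ and hence the Hermitian-Einstein equation. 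The delicate points I expect to confront are (i) verifying that the geodesic-Einstein constant equals the slope of the direct image---equivalently that the positive correction integrates to zero---which is where the projective-space fibres and the vanishing of $H^{n,1}(X_x,L)$ (Kodaira-Nakano, $L$ being fibrewise positive) should enter; and (ii) bookkeeping the dualization and line-bundle twist relating $\hat E$ to $E$ so that the Einstein constant is tracked correctly. Theorem \ref{T} together with the known semistability of Finsler-Einstein bundles provides a complementary route, constraining any destabilizing subsheaf and helping to rule out a nontrivial $S$.
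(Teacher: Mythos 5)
Your proposal follows essentially the same route as the paper: reduce Finsler--Einstein to geodesic--Einstein on $\mc{O}_{P(E)}(1)$ via the Kobayashi correspondence (Lemma \ref{lemma2}), apply Berndtsson's curvature formula to the $L^2$-metric on $\pi_*(K_{\mc{X}/M}+L)$, and use the purely topological slope identity $\deg E/\mathrm{rank}\,E=\tfrac{\mathrm{Vol}_\omega(M)}{2\pi m}\lambda$ to force the nonnegative H\"ormander correction to vanish after tracing and integrating, exactly as in Proposition \ref{thm1}, Corollary \ref{ob3} and Proposition \ref{ob4}. The only cosmetic difference is that the paper chooses $L=(r+1)\mc{O}_{P(E^*)}(1)-\pi^*(\det E)$ so that $\pi_*(K_{\mc{X}/M}+L)$ is $E$ on the nose (no untwisting needed), and the identification of $\lambda$ with the slope is an elementary fibre-integration identity requiring no vanishing theorem.
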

\textbf{Acknowledgement}:
The third author would like to express his
gratitude to Professor Bo Berndtsson and Professor Robert Berman 
for numerous helpful discussions about this paper. The authors would like to thank the anonymous referee for valuable comments which helped to improve the paper.

\section{Geodesic-Einstein metrics and a Donaldson type functional}

In this section, we first introduce the notion of a geodesic-Einstein metric on $L$, and then introduce a Donaldson type functional on $F^+(L)$ and prove that this functional attains its absolute minimum at the geodesic-Einstein metrics on $L$.

\subsection{Geodesic-Einstein metrics}\label{section1} Let $\pi:\mc{X}\to M$ be a holomorphic fibration with compact fibres. Let $L$ be a relative ample line bundle over $\mc{X}$. As usual, we denote by
$(z;v)=(z^1,\cdots, z^m; v^1,\cdots, v^n)$ a local admissible holomorphic coordinate system of $\mc{X}$ with $\pi(z;v)=z$.

For any smooth function $\phi$ on $\mc{X}$, we denote
$$\phi_{\alpha}:=\frac{\p \phi}{\p z^{\alpha}},\quad \phi_{\b{\beta}}:=\frac{\p \phi}{\p \b{z}^{\beta}},
\quad \phi_{i}:=\frac{\p \phi}{\p v^i},\quad \phi_{\b{j}}:=\frac{\p \phi}{\p \b{v}^j},$$
where $1\leq i,j\leq n, 1\leq \alpha,\beta\leq m$.

Set
\begin{align*}
  F^+(L):=\{\phi | \phi \,\,\text{is an admissible metric on}\, L\}.
\end{align*}
For any $\phi\in F^+(L)$, set
\begin{align}\label{horizontal}
  \frac{\delta}{\delta z^{\alpha}}:=\frac{\p}{\p z^{\alpha}}-\phi_{\alpha\b{j}}\phi^{\b{j}k}\frac{\p}{\p v^k}.
\end{align}
By a routine computation, one can show that $\{\frac{\delta}{\delta z^{\alpha}}\}_{1\leq \alpha\leq m}$ spans a well-defined horizontal subbundle of $T\mc{X}$. In fact, for any two local admissible coordinate neighborhoods $\{U_{A}, (z_A,v_A)\}$ and $\{U_{B}, (z_B,v_B)\}$, if $U_A\cap U_B\neq \emptyset$, one has the following holomorphic functions:
\begin{align}
z_B=z_B(z_A),\quad v_B=v_B(z_A,v_A).	
\end{align}
So 
\begin{align}\label{tran1}
\frac{\p^2\phi}{\p v^i_A \p\b{v}^j_A}=\frac{\p^2\phi}{\p v^k_B \p\b{v}^l_B}\frac{\p v^k_B}{\p v^i_A}\o{\frac{\p v^l_B}{\p v^j_A}}	
\end{align}
and 
\begin{align}\label{tran2}
\frac{\p^2\phi}{\p z^{\alpha}_A\p\b{v}^j_A}=\frac{\p}{\p z^{\alpha}_A}\left(\frac{\p\phi}{\p\b{v}^l_B}\o{\frac{\p v^l_B}{\p v^j_A}}\right)	=\left(\frac{\p z^{\gamma}_B}{\p z^{\alpha}_A}\frac{\p^2\phi}{\p\b{v}^l_B\p z^{\gamma}_B}+\frac{\p v^k_B}{\p z^{\alpha}_A}\frac{\p^2\phi}{\p v^k_B\p\b{v}^l_B}\right)\o{\frac{\p v^l_B}{\p v^j_A}}.
\end{align}

From (\ref{tran1}) and (\ref{tran2}), one has
\begin{align}
\begin{split}
	&\quad \frac{\delta}{\delta z^{\alpha}_A}=\frac{\p}{\p z^{\alpha}_A}-\frac{\p^2\phi}{\p z^{\alpha}_A\p \b{v}^j_A}\left(\frac{\p^2\phi}{\p v^i_A\p\b{v}^j_A}\right)^{-1}\frac{\p}{\p v^i_A}\\
	&=\frac{\p z^{\gamma}_B}{\p z^{\alpha}_A}\frac{\p}{\p z^{\gamma}_B}+\frac{\p v^k_B}{\p z^{\alpha}_A}\frac{\p}{\p v^k_B}-\left(\frac{\p z^{\gamma}_B}{\p z^{\alpha}_A}\frac{\p^2\phi}{\p\b{v}^l_B\p z^{\gamma}_B}+\frac{\p v^k_B}{\p z^{\alpha}_A}\frac{\p^2\phi}{\p v^k_B\p\b{v}^l_B}\right)\o{\frac{\p v^l_B}{\p v^j_A}}\left(\frac{\p^2\phi}{\p v^i_A\p\b{v}^j_A}\right)^{-1}\frac{\p v^s_B}{\p v^i_A}\frac{\p}{\p v^s_B}\\
	&=\frac{\p z^{\gamma}_B}{\p z^{\alpha}_A}\frac{\p}{\p z^{\gamma}_B}+\frac{\p v^k_B}{\p z^{\alpha}_A}\frac{\p}{\p v^k_B}-\left(\frac{\p z^{\gamma}_B}{\p z^{\alpha}_A}\frac{\p^2\phi}{\p\b{v}^l_B\p z^{\gamma}_B}+\frac{\p v^k_B}{\p z^{\alpha}_A}\frac{\p^2\phi}{\p v^k_B\p\b{v}^l_B}\right)\left(\frac{\p^2\phi}{\p v^s_B \p\b{v}^l_B}\right)^{-1}\frac{\p}{\p v^s_B}\\
	&=\frac{\p z^{\gamma}_B}{\p z^{\alpha}_A}\left(\frac{\p}{\p z^{\gamma}_B}-\frac{\p^2\phi}{\p\b{v}^l_B\p z^{\gamma}_B}\left(\frac{\p^2\phi}{\p v^s_B \p\b{v}^l_B}\right)^{-1}\frac{\p}{\p v^s_B}\right)=\frac{\p z^{\gamma}_B}{\p z^{\alpha}_A}\frac{\delta}{\delta z^\gamma_B}.
\end{split}	
\end{align}
Therefore, $\{\frac{\delta}{\delta z^{\alpha}}\}_{1\leq \alpha\leq m}$ spans a subbundle of $T\mc{X}$.

Let $\{dz^{\alpha};\delta v^k\}$
denote the dual frame of $\left\{\frac{\delta}{\delta z^{\alpha}}; \frac{\p}{\p v^i}\right\}$. One has
$$\delta v^k=dv^k+\phi^{k\b{l}}\phi_{\b{l}\alpha}dz^{\alpha}.$$
Moreover, the differential operators
\begin{align}\label{HV}
\p^V=\frac{\p}{\p v^i}\otimes \delta v^i,\quad \p^H=\frac{\delta}{\delta z^{\alpha}}\otimes dz^{\alpha}.
\end{align}
are well-defined.

For any $\phi\in F^+(L)$, the geodesic curvature $c(\phi)$ of $\phi$ (cf. \cite{Choi}, Definition 2.1) is defined by
\begin{align}\label{cphi}
  c(\phi)=\left(\phi_{\alpha\b{\beta}}-\phi_{\alpha\b{j}}\phi^{i\b{j}}\phi_{i\b{\beta}}\right)\sqrt{-1} dz^{\alpha}\wedge d\b{z}^{\beta},
\end{align}
which is clearly a horizontal real $(1,1)$ form on $\mc X$. From the following lemma, one sees that the geodesic curvature $c(\phi)$ of $\phi$ is also well-defined.
\begin{lemma}\label{lemma1} The following decomposition holds,
  \begin{align}
    \sqrt{-1}\p\b{\p}\phi=c(\phi)+\sqrt{-1}\phi_{i\b{j}}\delta v^i\wedge \delta \b{v}^j.
  \end{align}
\end{lemma}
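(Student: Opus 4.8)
The plan is to establish the identity by a direct computation in a local admissible coordinate system $(z;v)$, comparing the coefficients of $\sqrt{-1}\partial\bar\partial\phi$ in the coordinate coframe with those obtained after re-expressing everything in the adapted coframe $\{dz^\alpha,\delta v^i\}$. First I would expand
\begin{align*}
\sqrt{-1}\partial\bar\partial\phi=\sqrt{-1}\big(\phi_{\alpha\bar\beta}\,dz^\alpha\wedge d\bar z^\beta+\phi_{\alpha\bar j}\,dz^\alpha\wedge d\bar v^j+\phi_{i\bar\beta}\,dv^i\wedge d\bar z^\beta+\phi_{i\bar j}\,dv^i\wedge d\bar v^j\big),
\end{align*}
which is just the four-block decomposition of the Hessian into horizontal, mixed, and vertical parts relative to the coordinate frame. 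The next input is the coframe relation $\delta v^k=dv^k+\phi^{k\bar l}\phi_{\bar l\alpha}\,dz^\alpha$ (and its conjugate $\delta\bar v^j=d\bar v^j+\phi^{l\bar j}\phi_{l\bar\beta}\,d\bar z^\beta$, using that $\phi_{i\bar j}$ is Hermitian so $\overline{\phi^{k\bar l}}=\phi^{l\bar k}$), which I would use to eliminate $dv^i,d\bar v^j$ in favour of $\delta v^i,\delta\bar v^j,dz^\alpha,d\bar z^\beta$.

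The core of the argument is to compute the vertical term $\sqrt{-1}\phi_{i\bar j}\,\delta v^i\wedge\delta\bar v^j$ on the right-hand side and show that, together with $c(\phi)$, it reassembles the four blocks above. Substituting the coframe relations produces four pieces, and the key algebraic identities are $\phi_{i\bar j}\phi^{l\bar j}=\delta^l_i$ and $\phi_{i\bar j}\phi^{i\bar l}=\delta^l_j$. These show that the two cross terms of $\phi_{i\bar j}\,\delta v^i\wedge\delta\bar v^j$ reproduce exactly the mixed blocks $\phi_{\alpha\bar j}\,dz^\alpha\wedge d\bar v^j$ and $\phi_{i\bar\beta}\,dv^i\wedge d\bar z^\beta$, while its pure horizontal piece is precisely $\phi_{\alpha\bar j}\phi^{i\bar j}\phi_{i\bar\beta}\,dz^\alpha\wedge d\bar z^\beta$. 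Adding the correction term $-\phi_{\alpha\bar j}\phi^{i\bar j}\phi_{i\bar\beta}$ carried by $c(\phi)$ cancels this contribution and restores the coefficient $\phi_{\alpha\bar\beta}$, so the right-hand side collapses back to $\sqrt{-1}\partial\bar\partial\phi$, as claimed.

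I do not expect a genuine obstacle here: the whole statement is a bookkeeping identity, and the only place requiring care is the consistent use of the inverse-matrix conventions and the correct conjugation of indices when forming $\delta\bar v^j$. Conceptually, the content of the lemma is simply that $\sqrt{-1}\partial\bar\partial\phi$ has no mixed horizontal--vertical component: the horizontal lift $\frac{\delta}{\delta z^\alpha}=\frac{\partial}{\partial z^\alpha}-\phi_{\alpha\bar j}\phi^{\bar j k}\frac{\partial}{\partial v^k}$ was defined precisely so that $\sqrt{-1}\partial\bar\partial\phi\big(\tfrac{\delta}{\delta z^\alpha},\tfrac{\partial}{\partial\bar v^j}\big)=\phi_{\alpha\bar j}-\phi_{\alpha\bar m}\phi^{\bar m k}\phi_{k\bar j}=0$. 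Hence the form is block-diagonal with respect to the splitting $T\mc X=H\oplus V$, the $(H,\bar H)$-block being $c(\phi)$ and the $(V,\bar V)$-block being $\sqrt{-1}\phi_{i\bar j}\,\delta v^i\wedge\delta\bar v^j$; this observation both motivates and shortcuts the coordinate computation, and in particular makes the well-definedness of $c(\phi)$ manifest.
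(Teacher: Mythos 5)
Your proposal is correct and follows essentially the same route as the paper: expand $\delta v^i\wedge\delta\bar v^j$ via the coframe relation $\delta v^k=dv^k+\phi^{k\bar l}\phi_{\bar l\alpha}dz^\alpha$, use the inverse-matrix identities to identify the cross terms with the mixed blocks of $\sqrt{-1}\partial\bar\partial\phi$, and observe that the purely horizontal piece cancels the correction term in $c(\phi)$. The closing remark about block-diagonality with respect to $T\mathcal{X}=H\oplus V$ is a nice conceptual gloss but does not change the argument.
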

\begin{proof}
  By a direct computation, one has
  \begin{align*}
    &c(\phi)+\sqrt{-1}\phi_{i\b{j}}\delta v^i\wedge \delta \b{v}^j=\sqrt{-1}(\phi_{\alpha\b{\beta}}-\phi_{\alpha\b{l}}\phi^{k\b{l}}\phi_{k\b{\beta}})dz^{\alpha}\wedge d\b{z}^{\beta}\\
    &\quad+\sqrt{-1}\phi_{i\b{j}}(dv^i+\phi^{i\b{l}}\phi_{\b{l}\alpha}dz^{\alpha})\wedge (d\b{v}^j+\phi^{\b{j}k}\phi_{k\b{\beta}}d\b{z}^{\beta})\\
    &=\sqrt{-1}(\phi_{\alpha\b{\beta}}dz^{\alpha}\wedge d\b{z}^{\beta}+\phi_{\alpha\b{j}}dz^{\alpha}\wedge d\b{v}^j+\phi_{i\b{\beta}}dv^{i}\wedge d\b{z}^{\beta}+\phi_{i\b{j}}dv^i\wedge d\b{v}^j)\\
    &=\sqrt{-1}\p\b{\p}\phi.
  \end{align*}
\end{proof}

\begin{defn}\label{GeoEin} Let $\omega=\sqrt{-1}g_{\alpha\b{\beta}}dz^{\alpha}\wedge d\b{z}^{\beta}$ be a (fixed) K\"{a}hler metric on $M$.
  A metric $\phi\in F^+(L)$ is called a geodesic-Einstein metric on $L$ with respect to $\omega$ if it satisfies that
  \begin{align}\label{FE}
    tr_{\omega}c(\phi):=g^{\alpha\b{\beta}}\left(\phi_{\alpha\b{\beta}}-\phi_{\alpha\b{j}}\phi^{i\b{j}}\phi_{i\b{\beta}}\right)=\lambda,
  \end{align}
  where $\lambda$ is a constant.
\end{defn}

 \begin{prop}\label{prop1}
   Let $\phi$ be a geodesic-Einstein metric on $L$. Then
   \begin{align}\label{lamda}
     \lambda=\frac{2\pi m}{n+1}\frac{([\omega]^{m-1}c_1(L)^{n+1})[\mc{X}]}{([\omega]^mc_1(L)^n)[\mc{X}]}.
   \end{align}
   So $\lambda$ is a topological quantity depending only on the classes $c_1(L)$ and $[\omega]$.
 \end{prop}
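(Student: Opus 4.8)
The plan is to express both intersection numbers appearing in \eqref{lamda} as honest integrals over $\mc X$ of wedge products of the two natural closed forms at hand, and then to exploit the geodesic curvature decomposition of Lemma \ref{lemma1} to reduce everything to a single fiber-integration identity. Write $\Theta:=\sqrt{-1}\p\b\p\phi$; since $e^{-\phi}$ is a metric on $L$, the form $\frac{1}{2\pi}\Theta$ represents $c_1(L)$, while $\pi^*\omega$ represents $[\omega]$ on $\mc X$. Because $\dim_{\mb C}\mc X=m+n$, both $[\omega]^{m-1}c_1(L)^{n+1}$ and $[\omega]^m c_1(L)^n$ are top classes, and evaluating on $[\mc X]$ gives
$$([\omega]^{m-1}c_1(L)^{n+1})[\mc X]=\frac{1}{(2\pi)^{n+1}}\int_{\mc X}(\pi^*\omega)^{m-1}\wedge\Theta^{n+1},\qquad ([\omega]^m c_1(L)^n)[\mc X]=\frac{1}{(2\pi)^{n}}\int_{\mc X}(\pi^*\omega)^m\wedge\Theta^n.$$

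Next I would insert the decomposition $\Theta=c(\phi)+\sqrt{-1}\phi_{i\b j}\delta v^i\wedge\delta\b v^j$ from Lemma \ref{lemma1}, abbreviating the vertical part by $\Theta_V:=\sqrt{-1}\phi_{i\b j}\delta v^i\wedge\delta\b v^j$, and expand the powers $\Theta^n$ and $\Theta^{n+1}$ binomially. The key structural observation is a bidegree count in the horizontal/vertical splitting: both $\pi^*\omega$ and $c(\phi)$ are horizontal $(1,1)$-forms spanned by $dz^\alpha\wedge d\b z^\beta$, so any wedge of more than $m$ of them vanishes, whereas $\Theta_V$ is vertical with $\Theta_V^{n+1}=0$. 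Carrying out the count, wedging $(\pi^*\omega)^m$, already of full horizontal degree, against $\Theta^n$ kills every term containing a factor $c(\phi)$, leaving $(\pi^*\omega)^m\wedge\Theta^n=(\pi^*\omega)^m\wedge\Theta_V^n$; and wedging $(\pi^*\omega)^{m-1}$ against $\Theta^{n+1}$ forces exactly one factor of $c(\phi)$ and $n$ factors of $\Theta_V$, leaving $(\pi^*\omega)^{m-1}\wedge\Theta^{n+1}=(n+1)\,(\pi^*\omega)^{m-1}\wedge c(\phi)\wedge\Theta_V^n$.

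To close, I would use the elementary Kähler trace identity $\eta\wedge\omega^{m-1}=\frac{1}{m}(tr_\omega\eta)\,\omega^m$ for a horizontal $(1,1)$-form $\eta$, applied with $\eta=c(\phi)$. The geodesic-Einstein equation \eqref{FE} gives $tr_\omega c(\phi)=\lambda$, hence $(\pi^*\omega)^{m-1}\wedge c(\phi)=\frac{\lambda}{m}(\pi^*\omega)^m$ pointwise, and therefore $(\pi^*\omega)^{m-1}\wedge\Theta^{n+1}=\frac{(n+1)\lambda}{m}(\pi^*\omega)^m\wedge\Theta_V^n$. Dividing the two integrals, whose denominator $\int_{\mc X}(\pi^*\omega)^m\wedge\Theta_V^n$ is strictly positive since $\Theta_V$ is fiberwise positive, yields $\frac{\int_{\mc X}(\pi^*\omega)^{m-1}\wedge\Theta^{n+1}}{\int_{\mc X}(\pi^*\omega)^m\wedge\Theta^n}=\frac{(n+1)\lambda}{m}$, and restoring the $2\pi$ factors from the first step gives exactly \eqref{lamda}. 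The computation is short; the only place demanding care is the bidegree bookkeeping that isolates the single surviving term in each binomial expansion, together with the observation that $\lambda$ may be pulled out of the integral precisely because \eqref{FE} makes it a global constant rather than a function on $\mc X$.
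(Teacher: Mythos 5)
Your proposal is correct and follows essentially the same route as the paper: the paper's two-line computation $tr_{\omega}c(\phi)\,\omega^m\wedge(\sqrt{-1}\p\b{\p}\phi)_n=m\,c(\phi)\wedge\omega^{m-1}\wedge(\sqrt{-1}\p\b{\p}\phi)_n=m\,\omega^{m-1}\wedge(\sqrt{-1}\p\b{\p}\phi)_{n+1}$ is exactly your trace identity plus the bidegree count coming from the decomposition of Lemma \ref{lemma1}, followed by integration and the identification $\frac{1}{2\pi}[\sqrt{-1}\p\b{\p}\phi]=c_1(L)$. You merely spell out the horizontal/vertical bookkeeping and the positivity of the denominator more explicitly than the paper does.
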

\begin{proof} Since $\phi\in F^+(L)$ is a geodesic-Einstein metric, one has
  \begin{align}\label{sec1.1}
    \begin{split}
      tr_{\omega}c(\phi)\omega^m\wedge (\sqrt{-1}\p\b{\p}\phi)_n&=m c(\phi)\wedge \omega^{m-1}\wedge (\sqrt{-1}\p\b{\p}\phi)_n\\
      &=m\omega^{m-1}\wedge (\sqrt{-1}\p\b{\p}\phi)_{n+1},
    \end{split}
  \end{align}
  where $(\bullet)_r:=(\bullet)^r/r!$. Taking integral over $\mc{X}$ to the both sides of (\ref{sec1.1}), one gets
  \begin{align*}
    \lambda=\frac{m\int_{\mc{X}}\omega^{m-1}\wedge(\sqrt{-1}\p\b{\p}\phi)_{n+1}}{\int_{\mc{X}}\omega^m\wedge (\sqrt{-1}\p\b{\p}\phi)_n}=\frac{2\pi m}{n+1}\frac{([\omega]^{m-1}c_1(L)^{n+1})[\mc{X}]}{([\omega]^mc_1(L)^n)[\mc{X}]},
  \end{align*}
  which depends only on the classes $[\omega]$ and $c_1(L)$.
\end{proof}

\subsection{A Donaldson type functional} For any fixed metric $\psi\in F^+(L)$ on $L$ and any $\phi\in F^+(L)$, we define the following two functionals $\mc{E}, \mc{E}_1$:
 \begin{align}\label{E0}
   \mc{E}(\phi,\psi)=\frac{1}{n+1}\int_{\mc{X}/M}(\phi-\psi)\sum_{k=0}^n(\sqrt{-1}\p\b{\p}\phi)^k\wedge(\sqrt{-1}\p\b{\p}\psi)^{n-k}
 \end{align}
and
\begin{align}\label{E1}
  \mc{E}_1(\phi,\psi) =\frac{1}{n+2}\int_{\mc{X}/M}(\phi-\psi)\sum_{k=0}^{n+1}(\sqrt{-1}\p\b{\p}\phi)^k\wedge(\sqrt{-1}\p\b{\p}\psi)^{n+1-k}.
\end{align}
Note that $\mc{E}(\phi,\psi)$ is a smooth function, while $\mc{E}_1(\phi,\psi)$ is a smooth real $(1,1)$-form on $M$.

Now we introduce the following Donaldson type functional $\mc{L}$ on $F^+(L)$ by defining
\begin{align}\label{D}
    \mc{L}(\phi,\psi)=\int_{M}\left(\frac{\lambda}{m}\mc{E}(\phi,\psi)\wedge\omega-\frac{1}{n+1}\mc{E}_1(\phi,\psi)\right)\frac{\omega^{m-1}}{(m-1)!},
  \end{align}
where $\lambda$ is the constant given by (\ref{lamda}).

 Given any smooth family $\phi_t$ of admissible metrics on $L$, one has
 \begin{align*}
   \begin{split}
   \frac{d}{dt}\mc{E}(\phi_t,\psi)&=\frac{1}{n+1}\int_{\mc{X}/M}\dot{\phi}_t\sum_{k=0}^n(\sqrt{-1}\p\b{\p}\phi_t)^k\wedge(\sqrt{-1}\p\b{\p}\psi)^{n-k}\\
     &\quad+\frac{1}{n+1}\int_{\mc{X}/M}(\phi_t-\psi)\sum_{k=0}^n k(\sqrt{-1}\p\b{\p}\phi_t)^{k-1}\wedge(\sqrt{-1}\p\b{\p}\psi)^{n-k}\wedge \sqrt{-1}\p\b{\p}\dot{\phi}_t\\
     &=\frac{1}{n+1}\int_{\mc{X}/M}\dot{\phi}_t\sum_{k=0}^n(\sqrt{-1}\p\b{\p}\phi_t)^k\wedge(\sqrt{-1}\p\b{\p}\psi)^{n-k}\\
     &\quad+\frac{1}{n+1}\int_{\mc{X}/M}\dot{\phi}_t\sum_{k=0}^n k(\sqrt{-1}\p\b{\p}\phi_t)^k\wedge(\sqrt{-1}\p\b{\p}\psi)^{n-k}\\
     &\quad-\frac{1}{n+1}\int_{\mc{X}/M}\dot{\phi}_t\sum_{k=0}^n k(\sqrt{-1}\p\b{\p}\phi_t)^{k-1}\wedge(\sqrt{-1}\p\b{\p}\psi)^{n-k+1}\\
     &=\int_{\mc{X}/M}\dot{\phi}_t(\sqrt{-1}\p\b{\p}\phi_t)^n.
   \end{split}
 \end{align*}
Similarly, one has
\begin{align*}
  \frac{d}{d t}\mc{E}_1(\phi_t,\psi)\equiv\int_{\mc{X}/M}\dot{\phi}_t(\sqrt{-1}\p\b{\p}\phi_t)^{n+1},\quad \text{mod}\quad \text{Im}\p+\text{Im}\b{\p}.
\end{align*}

Therefore, one gets the following first variation of the Donaldson functional $\mc{L}(\cdot,\psi)$,
\begin{align}\label{first variation}
  \begin{split}
    -\frac{d}{d t}\mc{L}(\phi_t,\psi) &=\int_{\mc{X}}\left(\frac{1}{n+1}\dot{\phi}_t(\sqrt{-1}\p\b{\p}\phi_t)^{n+1}-\frac{\lambda}{m}\dot{\phi}_t(\sqrt{-1}\p\b{\p}\phi_t)^n\wedge\omega
    \right)\frac{\omega^{m-1}}{(m-1)!}\\
    &=\int_{\mc{X}}\dot{\phi}_t(c(\phi_t)-\frac{\lambda}{m}\omega)\wedge (\sqrt{-1}\p\b{\p}\phi_t)^n\wedge\frac{\omega^{m-1}}{(m-1)!}\\
    &=\int_{\mc{X}}\dot{\phi}_t(tr_{\omega}c(\phi_t)-\lambda)(\sqrt{-1}\p\b{\p}\phi_t)^n\wedge\frac{\omega^m}{m!}.
  \end{split}
\end{align}

\begin{prop}\label{prop2}
  $\phi\in F^+(L)$ is a geodesic-Einstein metric if and only if $\phi$ is a critical point of $\mc{L}(\cdot,\psi)$ on $F^+(L)$.
\end{prop}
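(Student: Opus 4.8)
The plan is to read off the proposition directly from the first-variation formula \eqref{first variation}, which has already been established in the excerpt. The essential observation is that this computation expresses the derivative of $\mc{L}(\cdot,\psi)$ along an arbitrary smooth family $\phi_t$ as an integral pairing of the infinitesimal variation $\dot\phi_t$ against the quantity $(tr_\omega c(\phi_t)-\lambda)$. So the entire content of the proposition reduces to an argument that a functional whose gradient is $(tr_\omega c(\phi)-\lambda)$ (with respect to a positive weight) vanishes in all directions precisely when this gradient is identically zero on $\mc{X}$.

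First I would recall that the tangent space to $F^+(L)$ at a point $\phi$ is naturally identified with $C^\infty(\mc{X},\mb{R})$: since the admissibility condition $\sqrt{-1}\p\b\p_{\mc{X}/M}\phi>0$ is open, any smooth real function $u$ on $\mc{X}$ gives an admissible variation $\phi_t=\phi+tu$ for $|t|$ small, with $\dot\phi_0=u$. Substituting into \eqref{first variation} gives
\begin{align*}
  -\frac{d}{dt}\Big|_{t=0}\mc{L}(\phi+tu,\psi)=\int_{\mc{X}}u\,(tr_{\omega}c(\phi)-\lambda)\,(\sqrt{-1}\p\b{\p}\phi)^n\wedge\frac{\omega^m}{m!}.
\end{align*}
By definition, $\phi$ is a critical point of $\mc{L}(\cdot,\psi)$ exactly when this integral vanishes for every $u\in C^\infty(\mc{X},\mb{R})$.

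Next I would invoke the standard fundamental-lemma-of-the-calculus-of-variations argument. The measure $d\mu:=(\sqrt{-1}\p\b{\p}\phi)^n\wedge\frac{\omega^m}{m!}$ is a smooth positive volume form on $\mc{X}$: the fiber factor $(\sqrt{-1}\p\b{\p}\phi)^n$ is positive because $\phi$ is admissible, and $\omega^m$ is positive because $\omega$ is K\"ahler, so their product is a nowhere-vanishing top-degree form. Hence the pairing $u\mapsto\int_{\mc{X}}u\,(tr_\omega c(\phi)-\lambda)\,d\mu$ is the $L^2(d\mu)$ inner product against the continuous function $f:=tr_\omega c(\phi)-\lambda$. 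If this vanishes for all smooth $u$, then taking $u$ to be a bump function supported near any point where $f\neq 0$ yields a contradiction; thus $f\equiv 0$, which is precisely \eqref{FE}. Conversely, if $\phi$ is geodesic-Einstein then $f\equiv 0$ and the integral vanishes for every $u$, so $\phi$ is critical. This establishes the equivalence.

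There is no serious obstacle here; the proposition is a formal consequence of the variational formula already derived. The only point requiring a line of care is the identification of the tangent space to $F^+(L)$ and the positivity of the measure $d\mu$ guaranteeing that the test-function argument applies — i.e. that criticality against \emph{all} variations, not merely a restricted class, forces the pointwise Einstein equation. Once these are noted, the result follows immediately from \eqref{first variation}.
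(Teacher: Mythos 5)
Your proof is correct and takes essentially the same approach as the paper: both directions are read off directly from the first-variation formula \eqref{first variation}. The only cosmetic difference is in the converse, where the paper chooses the single test variation $\dot{\phi}_t=tr_{\omega}c(\phi_t)-\lambda$ to obtain $\int_{\mc{X}}(tr_{\omega}c(\phi)-\lambda)^2(\sqrt{-1}\p\b{\p}\phi)^n\wedge\frac{\omega^m}{m!}=0$ at once, while you run the fundamental lemma with bump functions; both hinge on the positivity of $(\sqrt{-1}\p\b{\p}\phi)^n\wedge\omega^m$, which (to be precise) holds because only the vertical part $\sqrt{-1}\phi_{i\b{j}}\delta v^i\wedge\delta\b{v}^j$ of $\sqrt{-1}\p\b{\p}\phi$ survives the wedge with the horizontal top form $\omega^m$, and that part is positive by admissibility.
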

\begin{proof}
  One direction is easy. If  $\phi\in F^+(L)$ is a geodesic-Einstein metric, then for any smooth curve $\phi_t\in F^+(L)$ with $\phi_0=\phi$, one has by (\ref{first variation}),
  $$\frac{d}{d t}|_{t=0}\mc{L}(\phi_t,\psi)=0.$$
  So $\phi$ is a critical point of  $\mc{L}(\cdot,\psi)$ on $F^+(L)$.

  Conversely, if  $\phi$ is a critical point of $\mc{L}(\cdot,\psi)$, then by taking the following variation
  $$\dot{\phi}_t=tr_{\omega}c(\phi_t)-\lambda,$$
  one gets
  $$0=\int_{\mc{X}}(tr_{\omega}c(\phi_t)-\lambda)^2(\sqrt{-1}\p\b{\p}\phi)^n\wedge\frac{\omega^m}{m!},$$
  and so
  $$tr_{\omega}c(\phi)-\lambda=0,$$
  i.e. $\phi\in F^+(L)$ is a geodesic-Einstein metric.
\end{proof}
\bigskip
For the space of admissible metrics $F^+(L)$, one may consider the following geodesic equation
\begin{align}\label{11}
  c_t(\phi_t):=\ddot{\phi}_t-|\p^V\dot{\phi}_t|^2_{\phi_t}=0.
\end{align}
We view $t$ as a complex parameter and then $\phi_t$ in (\ref{11}) does not depend on the imaginary part of $t$. Moreover, we have
\begin{align}\label{geo}
  (\sqrt{-1}\p\b{\p}_{\mc{X}/M,t}\phi)^{n+1}=(n+1)(\ddot{\phi}_t-|\p^V\dot{\phi}_t|^2_{\phi_t})\sqrt{-1}dt\wedge d\b{t}\wedge (\p\b{\p}_{\mc{X}/M}\phi)^n=0.
\end{align}

Following X. Chen's method (cf. \cite{Chen1}),  one shows easily that there exists a $C^{1,1}$-solution for the geodesic curve equation (\ref{geo}), and moreover, this $C^{1,1}$-solution can be approximated by a family of smooth solutions of the following  equation
\begin{align}\label{family equation}
	(\sqrt{-1}\p\b{\p}_{\mc{X}/M,t}\phi)^{n+1}=\epsilon(n+1)(\sqrt{-1}\p\b{\p}_{\mc{X}/M}\psi)^n\sqrt{-1}dt\wedge d\b{t}
\end{align}
for any $\epsilon>0$ and a fixed metric $\psi\in F^+(L)$. Furthermore, the following analogue of Lemma 7 in \cite{Chen1} holds:
\begin{lemma}[X. Chen, \cite{Chen1}, Lemma 7]\label{approximation}
 The equation (\ref{family equation}) has a smooth solution $\phi_{t,\epsilon}\in F^+(L)$ for any small $\epsilon>0$ and any two given initial metrics $\phi_0,\phi_1\in F^+(L)$. Moreover, $\phi_{t,\epsilon}$ converges uniformly to a $C^{1,1}$ solution $\phi_t$ of the equation (\ref{geo}) as $\epsilon\to 0$, and $\phi_{t,\epsilon}$ has a uniformly bound, i.e. there is a constant $C$ independent of $t$ and $\epsilon$ such that $|\phi_{t,\epsilon}-\psi|<C$.
\end{lemma}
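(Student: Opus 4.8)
The plan is to recast (\ref{family equation}) as a Dirichlet problem for a (fibrewise) complex Monge--Amp\`ere equation and then transplant X. Chen's existence and a priori estimates to the relative setting. Since $\phi_t$ is independent of $\Im t$, I would first identify the $t$-parameter with a point of a closed annulus $\b{A}\subset\mb{C}$ (the strip $0\le\Re t\le 1$ modulo imaginary translation), and form the product fibration $\mc{X}\times\b{A}\to M$ with the pulled-back line bundle. Relative to $M$ the fibres are now the compact manifolds-with-boundary $\mc{X}_z\times\b{A}$ of complex dimension $n+1$, and the operator $\sqrt{-1}\p\b{\p}_{\mc{X}/M,t}$ is exactly the relative Monge--Amp\`ere operator in these $n+1$ fibre directions. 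Thus (\ref{family equation}) is, for each $z\in M$, a Dirichlet problem on $\mc{X}_z\times\b{A}$ with boundary data $\phi_0,\phi_1$ prescribed on $\mc{X}_z\times\p\b{A}$; for $\epsilon>0$ its right-hand side is a smooth, strictly positive $(n+1,n+1)$-form in the fibre directions, so the equation is genuinely non-degenerate there.

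For fixed $\epsilon>0$ I would establish existence by the continuity method (in $\epsilon$, or along a path joining the data to an explicit solution), solvability being governed by a priori estimates. The $C^0$ bound follows from the comparison principle, using suitable sub- and super-solutions built from the boundary data $\phi_0,\phi_1$ and from a large multiple of the reference metric $\psi$; this same comparison simultaneously yields the uniform control $|\phi_{t,\epsilon}-\psi|<C$ asserted in the lemma. Gradient and second-order (interior and boundary) estimates then render the linearised operator uniformly elliptic, after which Evans--Krylov and Schauder bootstrapping promote $\phi_{t,\epsilon}$ to a smooth admissible metric. For this fixed-$\epsilon$ step one may invoke directly the Dirichlet theory of Caffarelli--Kohn--Nirenberg--Spruck adapted to manifolds with boundary.

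The heart of the matter is the passage $\epsilon\to 0$, where the $C^0$, $C^1$ and full second-derivative bounds must be obtained independent of $\epsilon$ and, this being the relative version, uniform in the base point $z\in M$. Compactness of $\mc{X}$ and $M$ together with smoothness of the fibration and of $\psi$ let me fix all background quantities uniformly in $z$, so that Chen's estimates localise fibrewise with $z$-independent constants. Granting the uniform bound $\|\phi_{t,\epsilon}\|_{C^{1,1}}\le C$, Arzel\`a--Ascoli produces a subsequence converging in every $C^{1,\beta}$, $\beta<1$, to a limit $\phi_t\in C^{1,1}$; the left-hand side of (\ref{family equation}) then converges weakly while the right-hand side tends to $0$, so $\phi_t$ solves the degenerate equation (\ref{geo}) in the pluripotential (almost-everywhere) sense, and uniqueness of the bounded solution of the degenerate Dirichlet problem (Bedford--Taylor comparison) upgrades subsequential convergence to convergence of the whole family.

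The main obstacle is precisely this $\epsilon$-independent second-order estimate --- the boundary $C^2$ estimate and the interior Laplacian estimate that survive the degeneration --- which is the technical core of \cite{Chen1}. Two features special to our situation must be watched. First, the reference form $(\sqrt{-1}\p\b{\p}_{\mc{X}/M}\psi)^n$ is only relatively positive, degenerate along the horizontal directions of $\mc{X}$, which is exactly why the construction is posed fibrewise over $M$ rather than on $\mc{X}$ directly. Second, every constant in Chen's argument must be shown to be uniform as $z$ ranges over the compact base, so that the limit $\phi_t$ is genuinely fibrewise admissible and the estimate $|\phi_{t,\epsilon}-\psi|<C$ holds with a single constant $C$. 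Once these uniformities are secured, the remainder is a verbatim transcription of Chen's proof.
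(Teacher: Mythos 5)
The paper does not actually prove this lemma: it is stated and used by citation to X.~Chen's Lemma~7 in \cite{Chen1} (see also Lemma~2.3 of \cite{FLW1}), the implicit argument being precisely the fibrewise Dirichlet problem for the $\epsilon$-perturbed complex Monge--Amp\`ere equation on $\mc{X}_z\times\b{A}$ that you describe, solved by the continuity method with $\epsilon$-independent $C^{1,1}$ estimates and then degenerated. Your outline faithfully reconstructs that argument and correctly isolates the only points that genuinely need checking in the relative setting --- uniformity of Chen's estimates in the base point $z$ and the $C^0$ comparison against $\psi$ --- so it takes essentially the same route as the (cited) proof the paper relies on.
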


For a family of smooth metrics $\phi_t\in F^+(L)$, one has
\begin{align}\label{Don}
\begin{split}
\mc{L}(\phi_t,\psi)=&\int_{\mc X}\left(\frac{\lambda}{n+1}(\phi_t-\psi)\sum_{k=0}^n(\sqrt{-1}\p\b{\p}\phi_t)^k\wedge(\sqrt{-1}\p\b{\p}\psi)^{n-k}\wedge \frac{\omega^m}{m!}\right.\\
&+\left.\frac{1}{(n+1)(n+2)}(\phi_t-\psi)\sum_{k=0}^{n+1}(\sqrt{-1}\p\b{\p}\phi_t)^k\wedge(\sqrt{-1}\p\b{\p}\psi)^{n+1-k}\wedge\frac{\omega^{m-1}}{(m-1)!}
\right),
\end{split}
\end{align}
and so
\begin{align}\label{Don1}
    \sqrt{-1}\p\b{\p}_t\mc{L}(\phi_t,\psi)=&\int_{\mc X}\left(\frac{\lambda}{n+1}(\sqrt{-1}\p\b{\p}\phi_t)^{n+1}\wedge\frac{\omega^m}{m!}
    -\frac{1}{(n+1)(n+2)}(\sqrt{-1}\p\b{\p}\phi_t)^{n+2}\wedge \frac{\omega^{m-1}}{(m-1)!}\right)\\\nonumber
=&\int_{\mc X}\left(\left(|\p^H\dot{\phi}_t|^2_{\omega}-(\ddot{\phi}_{t\b{t}}-|\p^V\dot{\phi}_t|^2_{\phi_t})(tr_{\omega}c(\phi_t)-\lambda)\right)
(\sqrt{-1}\p\b{\p}\phi_t)^n\frac{\omega^m}{m!}\right)\sqrt{-1}dt\wedge d\b{t}.
\end{align}
Since $\phi_t$ is independent of the imaginary part of $t$, one gets
\begin{align}\label{inequ}
\begin{split}
\frac{d^2}{dt^2}\mc{L}(\phi_t,\psi)&=\int_{\mc{X}}\left(|\p^H\dot{\phi}_t|^2_{\omega}-(\ddot{\phi}_{t\b{t}}-|\p^V\dot{\phi}_t|^2_{\phi_t})(tr_{\omega}c(\phi_t)-\lambda)\right)(\sqrt{-1}\p\b{\p}\phi_t)^n\frac{\omega^m}{m!}\\
&\geq 	-\int_{\mc{X}}(\ddot{\phi}_{t}-|\p^V\dot{\phi}_t|^2_{\phi_t})(tr_{\omega}c(\phi_t)-\lambda)(\sqrt{-1}\p\b{\p}\phi_t)^n\frac{\omega^m}{m!}.
\end{split}
\end{align}

\begin{thm}\label{thm6} For any fixed metric $\psi\in F^+(L)$, the functional $\mc{L}(\cdot,\psi)$ attains the absolute minimum at geodesic-Einstein metrics.
\end{thm}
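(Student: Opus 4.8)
The plan is to show that whenever $\phi_0\in F^+(L)$ is geodesic-Einstein we have $\mc{L}(\phi_1,\psi)\geq \mc{L}(\phi_0,\psi)$ for every $\phi_1\in F^+(L)$; together with Proposition \ref{prop2}, which identifies the geodesic-Einstein metrics as the critical points of $\mc{L}(\cdot,\psi)$, this is exactly the assertion that the absolute minimum is attained at the geodesic-Einstein metrics. First I would fix such a $\phi_0$ and an arbitrary $\phi_1$, and join them by the $C^{1,1}$ geodesic for the equation (\ref{geo}); by Lemma \ref{approximation} this geodesic is the uniform limit, as $\epsilon\to 0$, of smooth families $\phi_{t,\epsilon}\in F^+(L)$ solving the perturbed equation (\ref{family equation}) with the fixed endpoints $\phi_{0,\epsilon}=\phi_0$ and $\phi_{1,\epsilon}=\phi_1$. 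The entire estimate is then run on the smooth approximants $f_\epsilon(t):=\mc{L}(\phi_{t,\epsilon},\psi)$, and one passes to the limit only at the very end.

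The first ingredient is an almost-convexity of $f_\epsilon$. The second-variation inequality (\ref{inequ}) applied to $\phi_{t,\epsilon}$ gives
\begin{align*}
f_\epsilon''(t)\geq -\int_{\mc{X}}\big(\ddot{\phi}_{t,\epsilon}-|\p^V\dot{\phi}_{t,\epsilon}|^2_{\phi_{t,\epsilon}}\big)\big(tr_{\omega}c(\phi_{t,\epsilon})-\lambda\big)(\sqrt{-1}\p\b{\p}\phi_{t,\epsilon})^n\wedge\frac{\omega^m}{m!}.
\end{align*}
The defining equation (\ref{family equation}), read through (\ref{geo}), says precisely that $\big(\ddot{\phi}_{t,\epsilon}-|\p^V\dot{\phi}_{t,\epsilon}|^2_{\phi_{t,\epsilon}}\big)(\sqrt{-1}\p\b{\p}_{\mc{X}/M}\phi_{t,\epsilon})^n=\epsilon\,(\sqrt{-1}\p\b{\p}_{\mc{X}/M}\psi)^n$, so the right-hand side becomes $-\epsilon\int_{\mc{X}}(tr_{\omega}c(\phi_{t,\epsilon})-\lambda)(\sqrt{-1}\p\b{\p}_{\mc{X}/M}\psi)^n\wedge\omega^m/m!$. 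I would then bound this integral uniformly in $t$ and $\epsilon$: writing $tr_{\omega}c(\phi_{t,\epsilon})\,\omega^m/m!=c(\phi_{t,\epsilon})\wedge\omega^{m-1}/(m-1)!$ and invoking Lemma \ref{lemma1} to replace $c(\phi_{t,\epsilon})$ by $\sqrt{-1}\p\b{\p}\phi_{t,\epsilon}$ (the vertical correction annihilates the top vertical form $(\sqrt{-1}\p\b{\p}_{\mc{X}/M}\psi)^n$), one integrates by parts so that the derivatives land on the fixed smooth form $(\sqrt{-1}\p\b{\p}_{\mc{X}/M}\psi)^n$ and controls the remainder by the uniform bound $|\phi_{t,\epsilon}-\psi|<C$ of Lemma \ref{approximation}. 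This yields $f_\epsilon''(t)\geq -C\epsilon$ with $C$ independent of $t$ and $\epsilon$.

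The second ingredient is that the left endpoint is critical. Since $\phi_{0,\epsilon}=\phi_0$ is geodesic-Einstein, $tr_{\omega}c(\phi_0)-\lambda\equiv 0$, so the first-variation formula (\ref{first variation}) forces $f_\epsilon'(0)=0$ for every $\epsilon$, irrespective of the tangent direction $\dot{\phi}_{0,\epsilon}$. Integrating $f_\epsilon''\geq -C\epsilon$ from $0$ to $s$ and using $f_\epsilon'(0)=0$ gives $f_\epsilon'(s)\geq -C\epsilon s$, and a second integration over $[0,1]$ gives $f_\epsilon(1)-f_\epsilon(0)\geq -C\epsilon/2$. Because the endpoints are held fixed, $f_\epsilon(0)=\mc{L}(\phi_0,\psi)$ and $f_\epsilon(1)=\mc{L}(\phi_1,\psi)$ for all $\epsilon$, so letting $\epsilon\to 0$ produces $\mc{L}(\phi_1,\psi)\geq\mc{L}(\phi_0,\psi)$, which is the desired conclusion.

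The main obstacle is the regularity gap. The genuine geodesic joining $\phi_0$ and $\phi_1$ is only $C^{1,1}$, so the second-variation computation (\ref{inequ}), which needs $\ddot{\phi}_t$ and $|\p^V\dot{\phi}_t|^2$ pointwise, cannot be carried out directly along it; this is exactly why one works with the smooth solutions $\phi_{t,\epsilon}$ of (\ref{family equation}) and must make every estimate uniform as $\epsilon\to 0$. The delicate step is the uniform control of $\int_{\mc{X}}(tr_{\omega}c(\phi_{t,\epsilon})-\lambda)(\sqrt{-1}\p\b{\p}_{\mc{X}/M}\psi)^n\wedge\omega^m/m!$, since a naive bound would involve uncontrolled second derivatives of $\phi_{t,\epsilon}$, and it is only the integration-by-parts reduction to the $C^0$ bound of Lemma \ref{approximation} that makes the error term genuinely $O(\epsilon)$.
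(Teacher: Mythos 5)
Your overall strategy is exactly the paper's: connect $\phi_0$ to $\phi_1$ by the smooth $\epsilon$-approximants of Lemma \ref{approximation}, use (\ref{inequ}) together with the equation (\ref{family equation}) to get $\frac{d^2}{dt^2}\mc{L}(\phi_{t,\epsilon},\psi)\geq -\epsilon C$ with $C$ uniform in $t$ and $\epsilon$ (via integration by parts against the fixed density and the uniform $C^0$ bound), use $tr_\omega c(\phi_0)\equiv\lambda$ to kill the first variation at $t=0$, and let $\epsilon\to 0$ with the endpoints fixed. Your double integration of $f_\epsilon''\geq -C\epsilon$ is the same as the paper's Taylor expansion with Lagrange remainder.

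There is, however, one incorrect justification at precisely the step you single out as delicate. You claim that in
\begin{align*}
\int_{\mc{X}}tr_{\omega}c(\phi_{t,\epsilon})\,(\sqrt{-1}\p\b{\p}_{\mc{X}/M}\psi)^n\wedge\frac{\omega^m}{m!}
\end{align*}
one may replace $c(\phi_{t,\epsilon})$ by $\sqrt{-1}\p\b{\p}\phi_{t,\epsilon}$ because ``the vertical correction annihilates the top vertical form.'' It does not: by Lemma \ref{lemma1} the correction is $\sqrt{-1}\phi_{i\b{j}}\delta v^i\wedge\delta\b{v}^j$ with $\delta v^i=dv^i+\phi^{i\b{l}}\phi_{\b{l}\alpha}dz^{\alpha}$, and while its components containing $dv^i$ or $d\b{v}^j$ do die against $(\sqrt{-1}\p\b{\p}_{\mc{X}/M}\psi)^n$, its purely horizontal component survives and contributes exactly $\sqrt{-1}\,\phi_{\alpha\b{j}}\phi^{i\b{j}}\phi_{i\b{\beta}}\,dz^{\alpha}\wedge d\b{z}^{\beta}$ --- i.e.\ the whole correction term in the definition (\ref{cphi}) of $c(\phi)$. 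So the claimed identity is false. The argument is nonetheless salvageable, and this is how the paper proceeds: since $(g^{\alpha\b{\beta}})$ and $(\phi^{i\b{j}})$ are positive definite, $g^{\alpha\b{\beta}}\phi_{\alpha\b{j}}\phi^{i\b{j}}\phi_{i\b{\beta}}\geq 0$, hence $tr_{\omega}c(\phi_{t,\epsilon})\leq g^{\alpha\b{\beta}}(\phi_{t,\epsilon})_{\alpha\b{\beta}}$ pointwise; because the error term enters $f_\epsilon''$ with a minus sign, only an \emph{upper} bound on the integral is needed, and the upper bound $\int g^{\alpha\b{\beta}}(\phi_{t,\epsilon})_{\alpha\b{\beta}}(\cdots)$ is then controlled by your integration by parts (with a partition of unity) and the uniform $C^0$ bound. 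Replace the false ``annihilation'' claim by this one-sided estimate and your proof coincides with the paper's.
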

\begin{proof} Assume $\phi_0$ is a geodesic-Einstein metric on $L$. For any $\phi_1\in F^+(L)$ and any $\epsilon>0$, by Lemma \ref{approximation}, one can connect $\phi_0$ and $\phi_1$ by a path of solutions $\phi_{t,\epsilon}$ of  the equation (\ref{family equation}). From (\ref{inequ}), one has
\begin{align}\label{118}
	\frac{d^2}{dt^2}\mc{L}(\phi_t,\psi)&\geq -\epsilon\int_{\mc{X}}(tr_{\omega}c(\phi_{t,\epsilon})-\lambda)(\sqrt{-1}\p\b{\p}\psi)^n\frac{\omega^m}{m!}.
\end{align}

Let $\{\rho_{A}\}$ be a partition of unity subordinate to some open covering $\{U_A\}$ of $\mc{X}$, where $1\leq A\leq N$. Let $\eta$ be a local smooth function satisfying
$\eta dV=(\sqrt{-1}\p\b{\p}\psi)^n\frac{\omega^m}{m!}$,
where $dV=(\sqrt{-1}dz^1\wedge d\b{z}^1)\wedge\cdots\wedge(\sqrt{-1}dz^m\wedge d\b{z}^m)\wedge (\sqrt{-1}dv^1\wedge d\b{v}^1)\wedge\cdots\wedge(\sqrt{-1}dv^n\wedge d\b{v}^n)$.
Then
\begin{align*}
\int_{\mc{X}}tr_{\omega}c(\phi_{t,\epsilon})(\sqrt{-1}\p\b{\p}\psi)^n\frac{\omega^m}{n!}&=\sum_{A=1}^N\int_{U_A}\rho_A tr_{\omega}c(\phi_{t,\epsilon})\eta dV\\
&=\sum_{A=1}^N\int_{U_A}g^{\alpha\b{\beta}}\left((\phi_{t,\epsilon})_{\alpha\b{\beta}}-(\phi_{t,\epsilon})_{\alpha\b{j}}(\phi_{t,\epsilon})^{i\b{j}}(\phi_{t,\epsilon})_{i\b{\beta}}\right)\rho_A \eta dV\\
&\leq \sum_{A=1}^N\int_{U_A}g^{\alpha\b{\beta}}(\phi_{t,\epsilon})_{\alpha\b{\beta}}\rho_A \eta dV
=\sum_{A=1}^N \int_{U_A}\phi_{t,\epsilon} \p_{\alpha}\p_{\b{\beta}}(g^{\alpha\b{\beta}}\rho_A\eta)dV.
\end{align*}
Since $\phi_{t,\epsilon}$ has a uniform bound, there exists a uniform constant $C_0>0$ such that
$$\int_{\mc{X}}tr_{\omega}c(\phi_{t,\epsilon})(\sqrt{-1}\p\b{\p}\psi)^n\frac{\omega^m}{n!}\leq C_0.$$
Thus by (\ref{118}), one gets
$$\frac{d^2}{dt^2}\mc{L}(\phi_{t,\epsilon},\psi)\geq -\epsilon\left(C_0-\lambda((2\pi c_1(L)^n)[\omega]^m)[\mc{X}]/(m!)\right)\geq -\epsilon C_1$$
for some $C_1>0$. Then by the Taylor expansion of $\mc{L}(\phi_t,\psi)$, one obtains that
\begin{align}\label{Doneqpsilon}
\begin{split}
\mc{L}(\phi_1,\psi)&=\mc{L}(\phi_0,\psi)+\frac{d}{dt}|_{t=0}\mc{L}(\phi_{t,\epsilon},\psi)+\frac{1}{2!}(\frac{d^2}{dt^2}|_{t=\xi}\mc{L}(\phi_{t,\epsilon},\psi))\\
&\geq \mc{L}(\phi_0,\psi)+\frac{d}{dt}|_{t=0}\mc{L}(\phi_{t,\epsilon},\psi)-\epsilon \frac{C_1}{2}
\end{split}
\end{align}
for some $\xi\in [0,1]$. Since $\phi_0$ is a geodesic-Einstein metric on $L$, so $\frac{d}{dt}|_{t=0}\mc{L}(\phi_t,\psi)=0$ for any smooth curve $\phi_t$. Taking $\epsilon\to 0$ to the both sides of (\ref{Doneqpsilon}), one has
\begin{align}
\mc{L}(\phi_1,\psi)\geq \mc{L}(\phi_0,\psi).	
\end{align}
 \end{proof}

Let $M$ be a projective manifold with an ample line bundle $H$. There exists a Hermitian metric $\|\cdot\|_H$ on $H$ such that first Chern form $c_1(H, \|\cdot\|_H)$ of $H$ is positive.
Denote $\omega_H=c_1(H, \|\cdot\|_H)$, which is a K\"{a}hler metric on $M$. For any closed hypersurface $V$ in $M$ defined by some holomorphic section $s$ of $H$, we have
 \begin{prop}\label{prop3}
  Let $\pi:\mc{X}\to M$ be a holomorphic fibration with $\dim M\geq 2$, $L\to \mc{X}$ a relative ample line bundle. Then with respect to $\omega_H$, the Donaldson type functional $\mc{L}_M(\phi,\psi)$ and $\mc{L}_V(\phi,\psi)$ verifies that
  \begin{align}
  \mc{L}_M(\phi,\psi)\geq \frac{1}{m-1}\mc{L}_V(\phi,\psi)-C\max_{p\in M}\int_{\mc{X}_p}(tr_{\omega_H}c(\phi)-\lambda)^2(\sqrt{-1}\p\b{\p}\phi)^n-C_1
\end{align}
for some constants $C, C_1$. Here we considered the restrictive fibration $\mc{X}\to V(\hookrightarrow M)$, the $\mc{L}_V(\phi,\psi)$ is given by 
$$\mc{L}_V(\phi,\psi)=\frac{1}{(m-2)!}\int_V \left(\frac{\lambda}{m}\mc{E}(\phi,\psi)\wedge\omega_H-\frac{1}{n+1}\mc{E}_1(\phi,\psi)\right)\wedge \omega_H^{m-2}.$$
\end{prop}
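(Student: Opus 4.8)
The plan is to realize $\mc{L}_V$ as an integral over $M$ of the density of $\mc{L}_M$ against the current of integration $[V]$, and then to convert the resulting difference into a fiberwise curvature integral by integration by parts. Write $\Phi:=\frac{\lambda}{m}\mc{E}(\phi,\psi)\,\omega_H-\frac{1}{n+1}\mc{E}_1(\phi,\psi)$ for the real $(1,1)$-form whose top-degree products define $\mc{L}_M$ and $\mc{L}_V$, set $\Theta_\phi:=\sqrt{-1}\p\b{\p}\phi$, $\Theta_\psi:=\sqrt{-1}\p\b{\p}\psi$, and normalize $s$ so that $\|s\|_H\le 1$, so that $w:=\frac{1}{2\pi}\log\|s\|_H^2\le 0$. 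The Poincar\'e--Lelong formula $[V]=\omega_H+\sqrt{-1}\p\b{\p}w$ gives
\[
\mc{L}_V(\phi,\psi)=\int_M\Phi\wedge\frac{\omega_H^{m-2}}{(m-2)!}\wedge[V]=(m-1)\,\mc{L}_M(\phi,\psi)+\mathrm{Err},
\]
where $\mathrm{Err}=\int_M\Phi\wedge\frac{\omega_H^{m-2}}{(m-2)!}\wedge\sqrt{-1}\p\b{\p}w$. Hence $\mc{L}_M=\frac{1}{m-1}\mc{L}_V-\frac{1}{m-1}\mathrm{Err}$, and the whole task reduces to bounding $\mathrm{Err}$ \emph{from above} by $C\max_{p}\int_{\mc{X}_p}(tr_{\omega_H}c(\phi)-\lambda)^2(\sqrt{-1}\p\b{\p}\phi)^n+C_1$.

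Next I would integrate by parts to move $\sqrt{-1}\p\b{\p}$ onto $\Phi$, which is legitimate since $w\in L^1(M)$ is quasi-plurisubharmonic and $\omega_H$ is closed, obtaining $\mathrm{Err}=\int_M(\sqrt{-1}\p\b{\p}\Phi)\wedge\frac{\omega_H^{m-2}}{(m-2)!}\,w$. Because fiber integration commutes with $\p$ and $\b{\p}$, and because $\sqrt{-1}\p\b{\p}(f\beta)=\sqrt{-1}\p\b{\p}f\wedge\beta$ for $f$ a function and $\beta$ closed, the same telescoping identity used for the first variation earlier in the paper yields the pushforward formulas
\[
\sqrt{-1}\p\b{\p}\mc{E}=\tfrac{1}{n+1}\pi_*\!\big(\Theta_\phi^{n+1}-\Theta_\psi^{n+1}\big),\qquad
\sqrt{-1}\p\b{\p}\mc{E}_1=\tfrac{1}{n+2}\pi_*\!\big(\Theta_\phi^{n+2}-\Theta_\psi^{n+2}\big).
\]
Using Lemma \ref{lemma1} to write $\Theta_\phi=c(\phi)+\sqrt{-1}\phi_{i\b{j}}\delta v^i\wedge\delta\b{v}^j$, only the purely vertical part of the fiber powers survives the wedge with the horizontal top form, so with $\mu:=tr_{\omega_H}c(\phi)$ the linear and quadratic trace identities $c(\phi)\wedge\omega_H^{m-1}=\frac{\mu}{m}\omega_H^m$ and $\frac{c(\phi)^2\wedge\omega_H^{m-2}}{(m-2)!}=(\mu^2-|c(\phi)|^2_{\omega_H})\frac{\omega_H^m}{m!}$ reduce the $\phi$-part of $\mathrm{Err}$ to
\[
\int_{\mc{X}}\frac{1}{m!}\Big(\tfrac{(m-1)\lambda}{m}\mu-\tfrac12(\mu^2-|c(\phi)|^2_{\omega_H})\Big)\,\omega_H^m\wedge(\sqrt{-1}\p\b{\p}\phi)^n\,w,
\]
while the $\psi$-terms contribute a fixed quantity absorbed into $C_1$.

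The final step is to extract the square. Completing the square gives $\frac{(m-1)\lambda}{m}\mu-\frac12(\mu^2-|c|^2)=-\frac12(\mu-\lambda)^2+\frac12|c|^2-\frac{\lambda}{m}\mu+\frac{\lambda^2}{2}$. Since $w\le 0$, the term $\frac12|c|^2\,w\le 0$ may simply be discarded for an upper bound; the term $\frac{\lambda^2}{2}\int w$ is a fixed constant; the linear term is estimated through $\mu=(\mu-\lambda)+\lambda$ together with $|\mu-\lambda|\le\frac12(1+(\mu-\lambda)^2)$; and the leading term contributes $\frac12\int_{\mc{X}}(\mu-\lambda)^2|w|\,\omega_H^m\wedge(\sqrt{-1}\p\b{\p}\phi)^n$. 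Applying Fubini for the fibration (using that $|w|$ and $\omega_H^m$ are pulled back from $M$) and the finiteness of $\int_M|w|\,\omega_H^m$, each such term is bounded by a fixed multiple of $\max_{p\in M}\int_{\mc{X}_p}(\mu-\lambda)^2(\sqrt{-1}\p\b{\p}\phi)^n$. Collecting the estimates bounds $\mathrm{Err}$ as required and, after relabelling the constants, proves the stated inequality.

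\emph{Main obstacle.} I expect the delicate point to be the integration by parts against the singular weight $w=\frac{1}{2\pi}\log\|s\|_H^2$, which is unbounded along $V$; this needs the $L^1$-integrability and quasi-plurisubharmonicity of $w$, and a careful approximation argument (or appeal to the Poincar\'e--Lelong current) to justify moving $\sqrt{-1}\p\b{\p}$ onto the smooth form $\Phi$. The remaining work is bookkeeping: verifying the pushforward $\sqrt{-1}\p\b{\p}$-identities, the trace identities, and—crucially—checking that after completing the square the only term with an unfavourable sign is the genuine $(\mu-\lambda)^2$ term, while the spurious $|c(\phi)|^2$ term carries the favourable sign once paired with $w\le 0$.
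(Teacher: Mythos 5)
Your proposal is correct and follows essentially the same route as the paper: the Poincar\'e--Lelong comparison of $\mc{L}_M$ with $\mc{L}_V$ via the weight $\log\|s\|_H^2\le 0$, the pushforward identities for $\sqrt{-1}\p\b{\p}\mc{E}$ and $\sqrt{-1}\p\b{\p}\mc{E}_1$, the trace identities from Lemma \ref{lemma1}, and completing the square to isolate $(tr_{\omega_H}c(\phi)-\lambda)^2$ before taking the fiberwise maximum. The only (harmless) divergence is in disposing of the residual terms: the paper groups $|c(\phi)|^2_{\omega_H}$ with part of $\mu^2$ and invokes $(tr_{\omega_H}c(\phi))^2\le m|c(\phi)|^2_{\omega_H}$ to get the clean lower bound $-\frac{1}{2m^2}(\mu-\lambda)^2$ for the density, whereas you discard $\tfrac12|c|^2 w\le 0$ directly and absorb a leftover linear term in $\mu$ into the constants, which only changes $C$ and $C_1$.
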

 \begin{proof} Note that by Lelong-Poincar\'{e} theorem (cf. \cite{Gri1}), one has
\begin{align*}
\frac{\sqrt{-1}}{2\pi}\b{\p}\p\log \|s\|^2_H &=\frac{\sqrt{-1}}{2\pi}\b{\p}\p\log |s|^2+c_1(H, \|\cdot\|_H)\\
&=-Div(s)+c_1(H, \|\cdot\|_H).
\end{align*}
We may and will assume that $\|s\|_H\leq 1$, which can be done by scaling the metric $\|\cdot\|_H$ suitably. Thus
$$\frac{\sqrt{-1}}{2\pi}\int_M \log \|s\|^2_H\wedge \p\b{\p}\eta=\int_V\eta-\int_{M}c_1(H, \|\cdot\|_H)\wedge \eta$$
for any $(n-1,n-1)$ form $\eta$. Thus, for any $k\geq 1$, one has
\begin{align}\label{MV}
\begin{split}
([\omega_H]^k c_1(L)^{m+n-k})[\mc{X}]&=([\omega_H]^k \pi_*(c_1(L)^{m+n-k}))[M]\\
&=([\omega_H]^{k-1} \pi_*(c_1(L)^{m+n-k}))[V]\\
&=([\omega_H]^{k-1} c_1(L)^{m+n-k})[\mc{X}|_V],
\end{split}
\end{align}
where $\mc{X}|_V:=\pi^{-1}(V)$. From (\ref{MV}) and (\ref{lamda}), one knows that
$\frac{\lambda}{m}=\frac{\lambda_{\mc{X}|_V}}{m-1}$, where $\lambda_{\mc{X}|_V}$ is defined by (\ref{lamda}) associated to the holomorphic fibration $\mc{X}|_V\to V$.

Therefore, with respect to the K\"{a}hler metric $\omega_H$, one has
\begin{align}
\label{0.2}
\begin{split}
  (m-1)!&\mc{L}_M(\phi,\psi)=\int_M \left(\frac{\lambda}{m}\mc{E}(\phi,\psi)\wedge\omega_H-\frac{1}{n+1}\mc{E}_1(\phi,\psi)\right)\wedge \omega_H^{m-1}\\
  &=\int_V \left(\frac{\lambda}{m}\mc{E}(\phi,\psi)\wedge\omega_H-\frac{1}{n+1}\mc{E}_1(\phi,\psi)\right)\wedge \omega_H^{m-2}\\
  &\quad -\frac{\sqrt{-1}}{2\pi}\int_M \log \|s\|^2_{H}\p\b{\p}\left(\frac{\lambda}{m}\mc{E}(\phi,\psi)\wedge\omega_H-\frac{1}{n+1}\mc{E}_1(\phi,\psi)\right)\wedge \omega_H^{m-2}\\
  &=(m-2)!\mc{L}_V(\phi,\psi)-\frac{\sqrt{-1}}{2\pi}\int_M \log \|s\|^2_{H}\p\b{\p}\left(\frac{\lambda}{m}\mc{E}(\phi,\psi)\wedge\omega_H-\frac{1}{n+1}\mc{E}_1(\phi,\psi)\right)\wedge \omega_H^{m-2}.
  \end{split}
\end{align}
Note that
 $$\sqrt{-1}\p\b{\p}\mc{E}(\phi,\psi)=\frac{1}{n+1}\left(\pi_*((\sqrt{-1}\p\b{\p}\phi)^{n+1})-\pi_*((\sqrt{-1}\p\b{\p}\psi)^{n+1})\right)$$
and
$$\sqrt{-1}\p\b{\p}\mc{E}_1(\phi,\psi)=\frac{1}{n+2}\left(\pi_*((\sqrt{-1}\p\b{\p}\phi)^{n+2})-\pi_*((\sqrt{-1}\p\b{\p}\psi)^{n+2})\right).$$
Therefore, the second term on the right hand side of (\ref{0.2}) amounts to
\begin{align}\label{0.3}
\begin{split}
  &\int_M \log \|s\|^2_{H}\sqrt{-1}\p\b{\p}\left(\frac{\lambda}{m}\mc{E}(\phi,\psi)\wedge\omega_H-\frac{1}{n+1}\mc{E}_1(\phi,\psi)\right)\wedge \omega_H^{m-2}\\
  &=\frac{1}{n+1}\int_M \log \|s\|^2_H \pi_*\left(\frac{\lambda}{m}(\sqrt{-1}\p\b{\p}\phi)^{n+1}\wedge \omega_H^{m-1}-\frac{1}{n+2}(\sqrt{-1}\p\b{\p}\phi)^{n+2}\wedge \omega_H^{m-2}\right)\\
  &\quad -\frac{1}{n+1}\int_M \log \|s\|^2_H \pi_*\left(\frac{\lambda}{m}(\sqrt{-1}\p\b{\p}\psi)^{n+1}\wedge \omega_H^{m-1}-\frac{1}{n+2}(\sqrt{-1}\p\b{\p}\psi)^{n+2}\wedge \omega_H^{m-2}\right).
  \end{split}
\end{align}
Since
$$\pi_*\left(\frac{\lambda}{m}(\sqrt{-1}\p\b{\p}\phi)^{n+1}\wedge \omega_H^{m-1}\right)=(n+1)\frac{\lambda}{m^2} tr_{\omega_H}c(\phi) (\sqrt{-1}\p\b{\p}\phi)^n\wedge\omega_H^m$$
and
$$\pi_*\left(\frac{1}{n+2}(\sqrt{-1}\p\b{\p}\phi)^{n+2}\wedge \omega_H^{m-2}\right)=\frac{n+1}{2m(m-1)}((tr_{\omega_H}c(\phi))^2-|c(\phi)|^2_{\omega_H})(\sqrt{-1}\p\b{\p}\phi)^n\wedge\omega_H^m,$$
one gets
\begin{align}\label{0.4}
\begin{split}
  &\frac{1}{n+1}\pi_*\left(\frac{\lambda}{m}(\sqrt{-1}\p\b{\p}\phi)^{n+1}\wedge \omega_H^{m-1}-\frac{1}{n+2}(\sqrt{-1}\p\b{\p}\phi)^{n+2}\wedge \omega_H^{m-2}\right)/\omega_H^m\\
  &=\pi_*\left((\frac{\lambda}{m^2} tr_{\omega_H}c(\phi)-\frac{1}{2m(m-1)}((tr_{\omega_H}c(\phi))^2-|c(\phi)|^2_{\omega_H}))(\sqrt{-1}\p\b{\p}\phi)^n\right)\\
  &=\pi_*\left(\left(-\frac{1}{2m^2}(tr_{\omega_H}c(\phi)-\lambda)^2+\frac{1}{2m^2(m-1)}(m|c(\phi)|^2_{\omega_H}-(tr_{\omega_H}c(\phi))^2)+\frac{\lambda^2}{2m^2}\right)(\sqrt{-1}\p\b{\p}\phi)^n\right)\\
  &\geq -\frac{1}{2m^2}\pi_*\left((tr_{\omega_H}c(\phi)-\lambda)^2(\sqrt{-1}\p\b{\p}\phi)^n\right),
  \end{split}
\end{align}
where the last inequality holds by the mean inequality.
From  (\ref{0.3}) and (\ref{0.4}), one has
\begin{align*}
   &\quad\int_M \log \|s\|^2_{H}\sqrt{-1}\p\b{\p}\left(\frac{\lambda}{m}\mc{E}(\phi,\psi)\wedge\omega_H-\frac{1}{n+1}\mc{E}_1(\phi,\psi)\right)\wedge \omega_H^{m-2}\\
     &\leq \left(\int_M(-\frac{1}{2m^2}\log \|s\|^2_H)\omega_H^m\right) \max_{p\in M}\int_{\mc{X}_p}(tr_{\omega_H}c(\phi)-\lambda)^2(\sqrt{-1}\p\b{\p}\phi)^n\\
     &\quad+\frac{1}{n+1}\int_M -\log \|s\|^2_H \pi_*\left(\frac{\lambda}{m}(\sqrt{-1}\p\b{\p}\psi)^{n+1}\wedge \omega_H^{m-1}-\frac{1}{n+2}(\sqrt{-1}\p\b{\p}\psi)^{n+2}\wedge \omega_H^{m-2}\right).
\end{align*}
By (\ref{0.2}), it follows that
\begin{align}\label{rela}
  \mc{L}_M(\phi,\psi)\geq \frac{1}{m-1}\mc{L}_V(\phi,\psi)-C\max_{p\in M}\int_{\mc{X}_p}(tr_{\omega_H}c(\phi)-\lambda)^2(\sqrt{-1}\p\b{\p}\phi)^n-C_1,
\end{align}
where
$$C=\frac{1}{4\pi m(m!)}\int_M(-\log\|s\|^2_H)\omega^m_H$$ and
$$C_1=\frac{1}{2\pi(n+1)((m-1)!)}\int_M -\log \|s\|^2_H \pi_*\left(\frac{\lambda}{m}(\sqrt{-1}\p\b{\p}\psi)^{n+1}\wedge \omega_H^{m-1}-\frac{1}{n+2}(\sqrt{-1}\p\b{\p}\psi)^{n+2}\wedge \omega_H^{m-2}\right).$$
\end{proof}

\section{Geodesic-Einstein metrics and notions of stabilities}

In this section, we introduce certain stabilities of a triple $({\mc{X}},M,L)$ and discuss the relationships between geodesic-Einstein metrics and these stabilities.

A fibration $\mc{Y}\to M-S$, with $S$ a closed subvariety in $M$ of ${\rm codim} S\geq 2$, is called a sub-fibration of the holomorphic fibration ${\mc X}\to M$ if for any $p\in M-S$, the fiber ${\mc Y}_p$ is a complex closed submanifold of the fiber ${\mc X}_p$. Let $\mc{F}$ be the set of sub-fibrations of the holomorphic fibration ${\mc X}\to M$.
Set for any $\mc{Y}\in \mathscr{F}$,
\begin{align}\label{l}
\lambda_{\mc{Y},L}=\frac{2\pi m}{{\dim\mc{Y}/M}+1}\frac{([\omega]^{m-1}c_1(L)^{{\dim\mc{Y}/M}+1})[\mc{Y}]}{([\omega]^mc_1(L)^{\dim\mc{Y}/M})[\mc{Y}]}.
\end{align}

Note that $\lambda_{\mc{Y},L}$ is well defined and independent of the metrics on $L$ by the Stoke's theorem and ${\rm codim} S\geq 2$.

Now we introduce some notions of the stability of a triple $({\mc{X}},M,L)$.
\begin{defn}\label{stability}
    A triple $(\mc{X},M,L)$ is called nonlinear
  semistable (resp. nonlinear stable) if $\lambda_{\mc{Y},L}\geq \lambda_{\mc{X},L}$  (resp. $\lambda_{\mc{Y},L}>\lambda_{\mc{X},L}$) for any sub-fibration $\mc{Y}\in \mathscr{F}$ with
  $\dim\mc{Y}<\dim\mc{X}.$
\end{defn}

We have the following theorem.
\begin{thm}\label{thm3}
  If $L$ admits a geodesic-Einstein metric, then the triple $(\mc{X},M,L)$ is nonlinear semistable.
\end{thm}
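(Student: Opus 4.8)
The plan is to use the restriction of the geodesic-Einstein metric to the sub-fibration as a test metric, and to compare geodesic curvatures through a Schur-complement monotonicity. Let $\phi$ be a geodesic-Einstein metric on $L$, so that $tr_\omega c(\phi)=\lambda=\lambda_{\mc{X},L}$ by Proposition \ref{prop1}, and fix a sub-fibration $\mc{Y}\to M-S\in\mathscr{F}$ with relative dimension $p:=\dim\mc{Y}/M<n$. Since $\mc{Y}_q$ is a closed complex submanifold of $\mc{X}_q$ for every $q\in M-S$, the restriction $\phi|_{\mc{Y}}$ remains fiberwise strictly plurisubharmonic, hence is an admissible metric on $L|_{\mc{Y}}$, and its geodesic curvature $c(\phi|_{\mc{Y}})$ computed for the fibration $\mc{Y}\to M$ is well defined.

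First I would prove the pointwise inequality $tr_\omega c(\phi|_{\mc{Y}})\geq tr_\omega c(\phi)=\lambda$ on $\mc{Y}$. Fixing $y\in\mc{Y}_q$, I choose admissible fiber coordinates $(v^1,\dots,v^n)$ on $\mc{X}$ adapted to $\mc{Y}$, so that $\mc{Y}$ is locally cut out by $v^{p+1}=\cdots=v^n=0$ while $(v^1,\dots,v^p)$ are admissible coordinates on $\mc{Y}$. By formula (\ref{cphi}), for a horizontal vector $\xi=\xi^\alpha\frac{\p}{\p z^\alpha}$ the value $c(\phi)_{\alpha\b{\beta}}\xi^\alpha\b{\xi}^\beta$ is the Schur complement of the vertical block $\phi_{i\b{j}}$ in the full Hessian, and so admits the variational description
\begin{align*}
c(\phi)_{\alpha\b{\beta}}\xi^\alpha\b{\xi}^\beta=\min_{\eta\in\mb{C}^n}\left(\phi_{\alpha\b{\beta}}\xi^\alpha\b{\xi}^\beta+\phi_{\alpha\b{j}}\xi^\alpha\b{\eta}^j+\phi_{i\b{\beta}}\eta^i\b{\xi}^\beta+\phi_{i\b{j}}\eta^i\b{\eta}^j\right).
\end{align*}
The analogous expression for $c(\phi|_{\mc{Y}})_{\alpha\b{\beta}}\xi^\alpha\b{\xi}^\beta$ is exactly this minimization constrained to $\eta$ with $\eta^{p+1}=\cdots=\eta^n=0$. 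Minimizing over a smaller space yields a larger value, so $c(\phi|_{\mc{Y}})\geq c(\phi)$ as horizontal Hermitian forms; tracing against the positive-definite $g^{\alpha\b{\beta}}$ gives the claimed inequality.

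Next I would express $\lambda_{\mc{Y},L}$ as a weighted average of $tr_\omega c(\phi|_{\mc{Y}})$. The pointwise identity underlying the proof of Proposition \ref{prop1}, which holds for \emph{any} admissible metric, gives on $\mc{Y}$ (of dimension $m+p$)
\begin{align*}
tr_\omega c(\phi|_{\mc{Y}})\,\omega^m\wedge(\sqrt{-1}\p\b{\p}\phi|_{\mc{Y}})_p=m\,\omega^{m-1}\wedge(\sqrt{-1}\p\b{\p}\phi|_{\mc{Y}})_{p+1}.
\end{align*}
Integrating over $\mc{Y}$, both sides become topological, and exactly as in (\ref{lamda}) the ratio of $\int_{\mc{Y}}tr_\omega c(\phi|_{\mc{Y}})\,\omega^m\wedge(\sqrt{-1}\p\b{\p}\phi|_{\mc{Y}})_p$ to $\int_{\mc{Y}}\omega^m\wedge(\sqrt{-1}\p\b{\p}\phi|_{\mc{Y}})_p$ equals $\lambda_{\mc{Y},L}$. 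These integrals are insensitive to the codimension $\geq 2$ locus $S$ over which $\mc{Y}$ is undefined, just as in the remark justifying (\ref{l}).

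Combining the two steps finishes the proof: since $\omega^m\wedge(\sqrt{-1}\p\b{\p}\phi|_{\mc{Y}})_p$ is a positive measure on $\mc{Y}$ and the integrand $tr_\omega c(\phi|_{\mc{Y}})$ is everywhere $\geq\lambda_{\mc{X},L}$, its weighted average $\lambda_{\mc{Y},L}$ satisfies $\lambda_{\mc{Y},L}\geq\lambda_{\mc{X},L}$, which is precisely nonlinear semistability. I expect the main obstacle to be the first step: making the adapted-coordinate choice rigorous and verifying that the geodesic curvature of the restricted fibration is genuinely the Schur complement with respect to the smaller vertical block, so that the variational comparison applies. The averaging identity and the handling of $S$ are routine given Proposition \ref{prop1}.
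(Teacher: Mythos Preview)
Your proposal is correct and follows essentially the same route as the paper: adapted fiber coordinates, the inequality $c(\phi|_{\mc{Y}})\geq c(\phi)|_{\mc{Y}}$ via Schur-complement monotonicity, and then the averaging identity from Proposition~\ref{prop1} applied on $\mc{Y}$. The only cosmetic difference is that the paper proves the Schur-complement inequality by an explicit block-matrix identity (Lemma~\ref{lemma3}), whereas you invoke the equivalent variational characterization of the Schur complement; both yield the same pointwise comparison.
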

\begin{proof}
  For any sub-fibration $\mc{Y}\in\mathscr{F}$ with $\dim \mc{Y}/M=n'$. We make the following convention for indices:
  $$1\leq a,b,c,d\leq n,\quad 1\leq i,j,k,l\leq n',$$
 and use the local admissible coordinate systems $(v^1,\cdots,v^{n'},\cdots,v^n)$ on the fibres of $\mc X$ such that $(v^1,\cdots,v^{n'},0,\cdots,0)\in{\mc Y}$. It  is clear that such
 local admissible coordinate systems are always exist.

  For any $\phi\in F^+(L)$, let $\phi|_{\mc{Y}}$ be the restriction of $\phi$ on the line bundle $L|_{\mc{Y}}$ over the sub-fibration $\mc{Y}\to M-S$. Let
 $c(\phi|_{\mc{Y}})$ denote the geodesic curvature of $\phi|_{\mc{Y}}$ which is defined similarly to (\ref{cphi}). By Lemma \ref{lemma3}, which will be proved later, one has
  \begin{align}
  \begin{split}
    c(\phi)|_{\mc{Y}}-c(\phi|_{\mc{Y}})&=\left((\phi_{\alpha\b{\beta}}-\phi_{\alpha\b{b}}\phi^{\b{b}a}\phi_{a\b{\beta}})-
   (\phi_{\alpha\b{\beta}}-\phi_{\alpha\b{j}}\phi^{\b{j}k}\phi_{k\b{\beta}})\right)\sqrt{-1}dz^{\alpha}\wedge d\b{z}^{\beta}\\
    &=(\phi_{\alpha\b{j}}\phi^{\b{j}k}\phi_{k\b{\beta}}-\phi_{\alpha\b{b}}\phi^{\b{b}a}\phi_{a\b{\beta}})\sqrt{-1}dz^{\alpha}\wedge d\b{z}^{\beta}\leq 0,
    \end{split}
  \end{align}
  where the last inequality holds by setting $B=(\phi_{i\b{j}}), A_2=(\phi_{\alpha\b{j}}), D=(\phi_{A\b{B}}), E=(\phi_{\alpha\b{B}})$ in the following Lemma \ref{lemma3}.
 Thus, when $\phi$ is geodesic-Einstein, one gets
 \begin{align}\label{1111}
 \begin{split}
   \lambda_{\mc{Y},L}&=\frac{2\pi m}{n'+1}\frac{([\omega]^{m-1}c_1(L)^{n'+1})[\mc{Y}]}{([\omega]^{m}c_1(L)^{n'})[\mc{Y}]}\\
   &=\frac{m}{n'+1}\frac{(\omega^{m-1}(\sqrt{-1}\p\b{\p}(\phi|_{\mc{Y}}))^{n'+1})[\mc{Y}]}{(\omega^{m}(\sqrt{-1}\p\b{\p}(\phi|_{\mc{Y}}))^{n'})[\mc{Y}]}\\
   &=\frac{(tr_{\omega}c(\phi|_{\mc{Y}})\omega^m(\sqrt{-1}\p\b{\p}(\phi|_{\mc{Y}}))^{n'})[\mc{Y}]}{(\omega^m(\sqrt{-1}\p\b{\p}(\phi|_{\mc{Y}}))^{n'})[\mc{Y}]}\\
      &\geq \frac{(tr_{\omega}c(\phi)\omega^m(\sqrt{-1}\p\b{\p}(\phi|_{\mc{Y}}))^{n'})[\mc{Y}]}{(\omega^m(\sqrt{-1}\p\b{\p}(\phi|_{\mc{Y}}))^{n'})[\mc{Y}]}\\
   &=tr_{\omega}c(\phi)=\lambda_{\mc{X},L}.
   \end{split}
 \end{align}
  The proof is complete.
\end{proof}
\begin{rem}\label{r1} Note that from the inequalities in (\ref{1111}), we find that if $\phi$ is a geodesic-Einstein metric on $L$, then for any ${\mc{Y}}\in \mathscr{F}$ with $\lambda_{\mc{Y},L}=\lambda_{\mc{X},L}$, the restriction metric $\phi|_{\mc{Y}}$ satisfies $c(\phi|_{\mc{Y}})=c(\phi)|_{\mc{Y}}$. Moreover,
$\phi|_{\mc{Y}}$ is also geodesic-Einstein on $L|_{\mc{Y}}$, that is, $tr_{\omega}c(\phi|_{\mc{Y}})=\lambda_{{\mc Y},L}$.
\end{rem}
\begin{lemma}\label{lemma3}
  Consider the following block matrix with $A=\b{A}^{\top}$,
\begin{equation*}
    A:=\begin{bmatrix}
    \begin{BMAT}{c.cc}{c.cc}
    A_1 & A_2 & A_3\\
    \b{A}_2^{\top}& B_2&B_3\\
    \b{A}_3^{\top}&\b{B}_3^{\top}&C_3
    \end{BMAT}
    \end{bmatrix}=\begin{bmatrix}
    \begin{BMAT}{c.c}{c.c}
    A_1 & E\\
   \b{E}^{\top}&D
    \end{BMAT}
    \end{bmatrix},
 \end{equation*}
where $D$ is a positive definite matrix, then $ED^{-1}\b{E}^{\top}-A_2B_2^{-1}\b{A}_2^{\top}$ is a semipositive definite matrix.
\end{lemma}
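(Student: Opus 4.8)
The plan is to recognize this statement as a monotonicity property of Schur complements and to prove it by an explicit block inversion. First I would match the two displayed decompositions of $A$: comparing the $2\times 2$ and $3\times 3$ forms shows that $E=(A_2,\ A_3)$ and that $D$ itself carries the block structure $D=\begin{bmatrix} B_2 & B_3\\ \bar B_3^\top & C_3\end{bmatrix}$. In particular, the quantity to be estimated, $ED^{-1}\bar E^\top-A_2B_2^{-1}\bar A_2^\top$, involves only the blocks $A_2,A_3,B_2,B_3,C_3$ and not $A_1$, so the hypothesis $D>0$ is the only positivity input actually used. Since $B_2$ is a principal submatrix of the positive definite matrix $D$, it is itself positive definite and invertible, and the Schur complement $S:=C_3-\bar B_3^\top B_2^{-1}B_3$ is positive definite as well.

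Next I would write $D^{-1}$ in block form relative to $B_2$ and $S$ (the standard inverse of a $2\times 2$ block matrix), substitute $E=(A_2,A_3)$, and carry out the multiplication $ED^{-1}\bar E^\top$. The expected outcome is the clean identity
\[
ED^{-1}\bar E^\top-A_2B_2^{-1}\bar A_2^\top=PS^{-1}\bar P^\top,\qquad P:=A_3-A_2B_2^{-1}B_3 .
\]
Because $S^{-1}$ is positive definite, the right-hand side $PS^{-1}\bar P^\top$ is manifestly semipositive definite, which is precisely the assertion. Equivalently, one may phrase the entire computation as a congruence (a block $LDL^\top$ factorization of $D$) applied to $\bar E^\top$, which makes the emergence of $P$ and $S$ transparent and sidesteps inverting $D$ explicitly.

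There is no genuine analytic difficulty here; the only point requiring care is the block bookkeeping together with the Hermitian conventions ($\bar A^\top$ versus $A$), since a stray sign or transpose would obscure the cancellation that collapses the four cross-terms into the single square $PS^{-1}\bar P^\top$. Conceptually, the identity records the two-step (quotient) property of Schur complements: eliminating the full block $D$ produces a smaller Schur complement of $A_1$ than eliminating only its sub-block $B_2$, and the gap between them is exactly $PS^{-1}\bar P^\top\ge 0$.
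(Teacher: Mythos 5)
Your proposal is correct and follows essentially the same route as the paper: the authors also block-diagonalize $D$ by the congruence $\bigl(\begin{smallmatrix} I & 0\\ -\b{B}_3^{\top}B_2^{-1} & I\end{smallmatrix}\bigr) D \bigl(\begin{smallmatrix} I & -B_2^{-1}B_3\\ 0 & I\end{smallmatrix}\bigr)=\mathrm{diag}(B_2,\,C_3-\b{B}_3^{\top}B_2^{-1}B_3)$, invert to get the block form of $D^{-1}$, and multiply out to obtain exactly your identity $ED^{-1}\b{E}^{\top}-A_2B_2^{-1}\b{A}_2^{\top}=(A_3-A_2B_2^{-1}B_3)(C_3-\b{B}_3^{\top}B_2^{-1}B_3)^{-1}\o{(A_3-A_2B_2^{-1}B_3)}^{\top}\geq 0$. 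No substantive difference.
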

\begin{proof}
Since $D$ is a positive definite matrix, so $B_2$ is also a positive definite matrix.  By a direct computation, one has
\begin{equation*}
\begin{bmatrix}
    \begin{BMAT}{c.c}{c.c}
    I & 0 \\
    -\b{B}^{\top}_3B_2^{-1}& I
    \end{BMAT}
    \end{bmatrix} D \begin{bmatrix}\begin{BMAT}{c.c}{c.c}
    I & -B_2^{-1}B_3 \\
    0& I
    \end{BMAT}
    \end{bmatrix}
    =\begin{bmatrix}
    \begin{BMAT}{c.c}{c.c}
    B_2 & 0 \\
    0& C_3-\b{B}^{\top}_3B_2^{-1}B_3
    \end{BMAT}
    \end{bmatrix}.
\end{equation*}
So  $C_3-\b{B}^{\top}_3B_2^{-1}B_3$ is a positive definite matrix and
\begin{equation*}
D^{-1}=\begin{bmatrix}\begin{BMAT}{c.c}{c.c}
    I & -B_2^{-1}B_3 \\
    0& I
    \end{BMAT}
    \end{bmatrix}\begin{bmatrix}
    \begin{BMAT}{c.c}{c.c}
    B_2^{-1} & 0 \\
    0& (C_3-\b{B}^{\top}_3B_2^{-1}B_3)^{-1}
    \end{BMAT}
    \end{bmatrix}\begin{bmatrix}
    \begin{BMAT}{c.c}{c.c}
    I & 0 \\
    -\b{B}^{\top}_3B_2^{-1}& I
    \end{BMAT}
    \end{bmatrix}.
\end{equation*}
Therefore,
\begin{align*}
\begin{split}
	ED^{-1}\b{E}^{\top}&= [A_2,A_3]\begin{bmatrix}\begin{BMAT}{c.c}{c.c}
    I & -B_2^{-1}B_3 \\
    0& I
    \end{BMAT}
    \end{bmatrix}\begin{bmatrix}
    \begin{BMAT}{c.c}{c.c}
    B_2^{-1} & 0 \\
    0& (C_3-\b{B}^{\top}_3B_2^{-1}B_3)^{-1}
    \end{BMAT}
    \end{bmatrix}\begin{bmatrix}
    \begin{BMAT}{c.c}{c.c}
    I & 0 \\
    -\b{B}^{\top}_3B_2^{-1}& I
    \end{BMAT}\end{bmatrix}[\b{A}_2,\b{A}_3]^{\top}\\
    &=A_2B_2^{-1}\b{A}_2^{\top}+(A_3-A_2B_2^{-1}B_3)(C_3-\b{B}^{\top}_3B_2^{-1}B_3)^{-1}(\o{A_3-A_2B_2^{-1}B_3})^{T}.
    \end{split}	
\end{align*}
So
\begin{align}
	ED^{-1}\b{E}^{\top}-A_2B_2^{-1}\b{A}_2^{\top}=(A_3-A_2B_2^{-1}B_3)(C_3-\b{B}^{\top}_3B_2^{-1}B_3)^{-1}(\o{A_3-A_2B_2^{-1}B_3})^{T}
\end{align}
is a semipositive definite matrix.
\end{proof}

\begin{defn}\label{poly}
The triple $(\mc{X},M,L)$ is called nonlinear polystable if there exists a filtration
  \begin{align}
    \mc{X}_0:=\mc{X}\supset \mc{X}_1\supset\cdots\supset \mc{X}_{N},
  \end{align}
  where $(\mc{X}_i,S_i,L)$ is a maximal nonlinear semistable sub-fibration of $(\mc{X}_{i-1},S_{i-1},L)$ with $\lambda_{\mc{X}_i,L}=\lambda_{\mc{X}_{i-1},L}$ for $1\leq i\leq N$, and $(\mc{X}_N,M-S_N,L)$ is also nonlinear stable, and moreover,  there exists a family of metrics $\phi_{i-1}\in F^{+}(L|_{{\mc X}_{i-1}})$ such that $c(\phi_{i-1})|_{\mc{X}_i}= c(\phi_{i-1}|_{{\mc X}_i})$.
 \end{defn}

Clearly, a nonlinear stable triple $(\mc{X},M,L)$ must be nonlinear polystable. Moreover, we have
\begin{thm}\label{thm2}
  If $L$ admits a geodesic-Einstein metric, then the triple $(\mc{X},M,L)$ is nonlinear polystable.
\end{thm}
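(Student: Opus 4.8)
The plan is to build the filtration required by Definition~\ref{poly} by induction on dimension, at each stage descending to a maximal sub-fibration of equal slope and transporting the geodesic-Einstein structure downward by means of Remark~\ref{r1}. I would start by setting $\mc{X}_0:=\mc{X}$, $S_0:=\emptyset$, and $\phi_0:=\phi$ the given geodesic-Einstein metric on $L$; by Theorem~\ref{thm3} the triple $(\mc{X}_0,M,L)$ is nonlinear semistable. The inductive hypothesis at the $i$-th stage will be that $\mc{X}_{i-1}\to M-S_{i-1}$ is a nonlinear semistable sub-fibration carrying the geodesic-Einstein metric $\phi_{i-1}:=\phi|_{\mc{X}_{i-1}}$ on $L|_{\mc{X}_{i-1}}$ (with respect to $\omega$ and the constant $\lambda_{\mc{X}_{i-1},L}$), where $S_{i-1}$ has ${\rm codim}\,S_{i-1}\geq 2$ in $M$.

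For the inductive step, if $(\mc{X}_{i-1},M-S_{i-1},L)$ is already nonlinear stable I would stop, taking $N=i-1$. Otherwise there is a proper sub-fibration $\mc{Y}\subsetneq\mc{X}_{i-1}$ with $\lambda_{\mc{Y},L}\leq\lambda_{\mc{X}_{i-1},L}$, and semistability upgrades this to the equality $\lambda_{\mc{Y},L}=\lambda_{\mc{X}_{i-1},L}$. I would then choose $\mc{X}_i$ to be a sub-fibration of $\mc{X}_{i-1}$ of maximal relative dimension among all those satisfying $\lambda_{\mc{X}_i,L}=\lambda_{\mc{X}_{i-1},L}$; such a maximizer exists since the relative dimensions range over the finite set $\{0,1,\dots,\dim\mc{X}_{i-1}/M-1\}$, and its exceptional locus $S_i\supset S_{i-1}$ remains of codimension $\geq 2$, being a finite union of such subvarieties. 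Because $\lambda_{\mc{X}_i,L}=\lambda_{\mc{X}_{i-1},L}$, Remark~\ref{r1} (applied to the geodesic-Einstein metric $\phi_{i-1}$ on $\mc{X}_{i-1}$ with $\mc{X}_i$ playing the role of $\mc{Y}$) yields $c(\phi_{i-1})|_{\mc{X}_i}=c(\phi_{i-1}|_{\mc{X}_i})$, which is precisely the metric compatibility demanded by Definition~\ref{poly}, and shows that $\phi_i:=\phi_{i-1}|_{\mc{X}_i}$ is geodesic-Einstein on $L|_{\mc{X}_i}$ with constant $\lambda_{\mc{X}_i,L}$. Applying Theorem~\ref{thm3} to $(\mc{X}_i,M-S_i,L)$ then gives nonlinear semistability, and the maximality of the relative dimension forces $\mc{X}_i$ to be a \emph{maximal} nonlinear semistable sub-fibration of equal slope, since any strictly larger semistable sub-fibration of equal slope would have strictly larger relative dimension. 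This reproduces the inductive hypothesis one dimension lower.

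Finally I would invoke termination: each step strictly decreases $\dim\mc{X}_i$, so the process halts after finitely many steps, and it can halt only at a fibration that is nonlinear stable, which we name $\mc{X}_N$. The resulting chain $\mc{X}_0\supset\mc{X}_1\supset\cdots\supset\mc{X}_N$ then meets every requirement of Definition~\ref{poly}, whence $(\mc{X},M,L)$ is nonlinear polystable.

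The step I expect to be the main obstacle is twofold: first, confirming that the maximal-dimension equal-slope sub-fibration is genuinely a maximal semistable one in the sense of Definition~\ref{poly}; and second, justifying that Remark~\ref{r1} and Theorem~\ref{thm3} may be applied to the sub-fibration $(\mc{X}_{i-1},M-S_{i-1},L)$ over the \emph{noncompact} base $M-S_{i-1}$. The first is delivered cleanly by combining the semistability of $\mc{X}_{i-1}$ with the dimension maximality, while the second follows because ${\rm codim}\,S_{i-1}\geq 2$ keeps all the integrals defining $\lambda_{\bullet,L}$ well-defined and the relevant Stokes-type arguments valid, exactly as remarked after \eqref{l}. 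With these two points in hand the remaining content is the bookkeeping of the exceptional sets $S_i$ and the termination count, both routine.
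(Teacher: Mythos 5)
Your proposal is correct and follows essentially the same route as the paper's proof: semistability via Theorem \ref{thm3}, then an induction that at each step selects a maximal equal-slope sub-fibration and uses Remark \ref{r1} to obtain the compatibility $c(\phi_{i-1})|_{\mc{X}_i}=c(\phi_{i-1}|_{\mc{X}_i})$ and to transport the geodesic-Einstein condition down the filtration. The paper's own argument is a terser version of yours; your added care about the existence of the dimension-maximal sub-fibration, the exceptional loci $S_i$, and termination merely fills in details the paper leaves implicit.
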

\begin{proof}
 Since $L$ admits a geodesic-Einstein metric, so by Theorem \ref{thm3}, it is nonlinear semistable. If $\lambda_{\mc{Y},L}>\lambda_{\mc{X},L}$ for all $\mc{Y}\in \mathscr{F}$, then the triple $(\mc{X},M,L)$ is nonlinear stable and so it is nonlinear polystable; otherwise,
  there exists a maximal sub-fibration $\mc{X}_1\subset \mc{X}$ such that $\lambda_{\mc{X}_1,L}=\lambda_{\mc{X},L}$. Thus, by Remark \ref{r1}, one has
    $$c(\phi)|_{\mc{X}_1}=c(\phi|_{\mc{X}_1}).$$
  Now by induction, the proof is complete.
\end{proof}

\section{A special fibration: the projective bundles}

The curvature of direct image sheaf has been computed in (\cite{Bern2}, \cite{Bern3}, \cite{Bern4}, \cite{Ke1}).
In this section, we  will first review some works of Berndtsson on $L^2$ metrics and then discuss the relationships of the geodesic-Einstein metrics on a relative ample bundle $L$ over a holomorphic fibration and the $L^2$ metrics on the related direct image bundles. Finally, we study the special holomorphic fibration $\pi: P(E):=(E-\{0\})/\mb{C}^*\to M$ associated to a
holomorphic vector bundle $E\to (M,\omega)$.

\subsection{$L^2$ metrics on direct image bundles.} For any admissible metric $\phi$ on a relative ample line bundle $L$ over a holomorphic fibration $\pi: \mc{X}\to M$, we consider the direct image sheaf $E:=\pi_*(K_{\mc{X}/M}+L)$. Then $E$ is a holomorphic vector bundle. In fact, for any point $p\in M$, taking a local coordinate neighborhood $(U;\{z^{\alpha}\})$ of $p$, then $\phi+\beta\sum_{\alpha=1}^m |z^\alpha|^2$ is a metric on the line bundle $L\to \mc{X}|_U$, whose curvature is 
$$\sqrt{-1}\p\b{\p}\phi+\beta\sqrt{-1}\sum_{\alpha=1}^m dz^{\alpha}\wedge d\b{z}^{\beta}.$$
By taking $\beta$ large enough,  the curvature of $\phi+\beta\sum_{\alpha=1}^m |z^\alpha|^2$ is positive. The same argument as in \cite[\S 4, page 542]{Bern2},  there exists a local holomorphic frame for $E$. So $E$ is a holomorphic vector bundle.

%By Ohsawa-Takegoshi extension (cf. \cite[\S 8. Appendix]{Bern2}), for a basis $\{u_i\}$ of $E_p$, there exists  locally holomorphic extension $\{\tilde{u}_i\}$ in $K_{\mc{X}_V}$ for some small neighborhood $V\subset U$ of $p$ whose restriction to each fiber is $u_i$. So we get a local frame for $E$. We define a complex structure on $E$ by saying that $u$ is a local holomorphic section of $E$ if $u\wedge dz^1\wedge \cdots\wedge dz^m$ is a local holomorphic section of $K_{\mc{X}}$. 

Following Berndtsson (cf. \cite{Bern2}, \cite{Bern3}, \cite{Bern4}), we define the following $L^2$ metric on
the direct image bundle $E:=\pi_*(K_{\mc{X}/M}+L)$: for any $u\in E_{p}\equiv H^0(\mc{X}_p, (L+K_{\mc{X}/M})_p)$, $p\in M$, then we define
\begin{align}\label{L2 metric}
\|u\|^2=\int_{\mc{X}_p}|u|^2e^{-\phi}. 	
\end{align}
Note that $u$ can be written locally as $u=f dv\wedge e$, where $e$ is a local holomorphic frame for $L|_{\mc X}$, and so locally
$$|u|^2e^{-\phi}=(\sqrt{-1})^{n^2}|f|^2 |e|^2dv\wedge d\b{v}=(\sqrt{-1})^{n^2}|f|^2 e^{-\phi}dv\wedge d\b{v},$$
where $dv=dv^1\wedge \cdots\wedge dv^n$ is the fiber volume.

 The following theorem actually was proved by Berndtsson in \cite[Theorem 1.2]{Bern4}, here we will give a proof for reader's convenience.

\begin{thm}[{\cite[Theorem 1.2]{Bern4}}]
\label{thm4} For any $p\in M$ and let $u\in E_{p}$, one has
\begin{align}\label{cur}
\langle \sqrt{-1}\Theta^{E}u,u\rangle=\int_{\pi^{-1}(p)}c(\phi)|u|^2e^{-\phi}+\langle(1+\Box')^{-1}i_{\b{\p}^V\frac{\delta}{\delta z^{\alpha}}}u,i_{\b{\p}^V\frac{\delta}{\delta z^{\beta}}}u\rangle\sqrt{-1}dz^{\alpha}\wedge d\b{z}^{\beta},
\end{align}
where $\Theta^{E}$ denotes the curvature of the Chern connection on $E$ with the $L^2$ metric defined above, here $\Box'=\n'\n'^*+\n'^*\n$ is the Laplacian on $L|_{\pi^{-1}(p)}$-valued forms on $\pi^{-1}(p)$ defined by the $(1,0)$-part of the Chern connection on $L|_{\pi^{-1}(p)}$.
\end{thm}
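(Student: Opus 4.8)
The plan is to follow Berndtsson's strategy of testing the curvature of the direct image bundle against a local holomorphic section and reducing everything to a fibre integral. The starting point is the general identity valid for any Hermitian holomorphic bundle $(E,\|\cdot\|)$ and any local holomorphic section $u$,
\begin{align*}
\langle\sqrt{-1}\Theta^E u,u\rangle=-\sqrt{-1}\partial\bar\partial\|u\|^2+\langle\nabla'^E u,\nabla'^E u\rangle,
\end{align*}
where $\langle\nabla'^E u,\nabla'^E u\rangle$ denotes the nonnegative $(1,1)$-form $\sqrt{-1}\langle\nabla'^E_{\alpha}u,\nabla'^E_{\beta}u\rangle\,dz^\alpha\wedge d\bar z^\beta$; this is a purely formal consequence of the compatibility of the Chern connection with the metric and of $\bar\partial^E u=0$. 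Thus the whole computation splits into understanding $\sqrt{-1}\partial\bar\partial\|u\|^2$ over $M$ and identifying the Chern derivative $\nabla'^E u$. The key structural fact I would exploit is that a local holomorphic section of $E=\pi_*(K_{\mathcal{X}/M}+L)$ over $U\subset M$ is exactly a genuine holomorphic section $u$ of the line bundle $K_{\mathcal{X}/M}+L$ over $\pi^{-1}(U)$, written locally as $u=f\,dv^1\wedge\cdots\wedge dv^n\otimes e$ with $f$ holomorphic and $e$ a local holomorphic frame of $L$. This honest total-space representative is what makes the fibre-integral differentiation tractable.

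Next I would differentiate $\|u\|^2=\int_{\mathcal{X}_p}|u|^2e^{-\phi}$ along $M$, using the horizontal lift $\frac{\delta}{\delta z^\alpha}$ of Section \ref{section1} to separate horizontal and vertical contributions before pushing the derivatives inside the fibre integral. Here the decomposition of Lemma \ref{lemma1}, namely $\sqrt{-1}\partial\bar\partial\phi=c(\phi)+\sqrt{-1}\phi_{i\bar j}\,\delta v^i\wedge\delta\bar v^j$, is the essential input: when the weight $e^{-\phi}$ is differentiated twice in the base directions, the purely horizontal part of $\sqrt{-1}\partial\bar\partial\phi$ is precisely the geodesic curvature $c(\phi)$, and this is what produces the first term $\int_{\pi^{-1}(p)}c(\phi)|u|^2e^{-\phi}$ of (\ref{cur}). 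The remaining mixed components govern the failure of the horizontal lift to preserve holomorphicity along the fibres: differentiating $u$ along $\frac{\delta}{\delta z^\alpha}$ and projecting onto the fibrewise holomorphic forms gives $\nabla'^E_\alpha u$, while the complementary, $\bar\partial^V$-exact part is controlled by $\bar\partial^V\frac{\delta}{\delta z^\alpha}$, the Kodaira--Spencer tensor of the family, so that the non-holomorphic content of the variation is measured by the $L$-valued forms $i_{\bar\partial^V\frac{\delta}{\delta z^\alpha}}u$.

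The main obstacle is the second term, i.e. accounting exactly for this obstruction. The difficulty is that $\nabla'^E_\alpha u$ is the fibrewise Bergman projection of the naive horizontal derivative of $u$; its error is $\bar\partial^V$-exact, and to write $\langle\nabla'^E u,\nabla'^E u\rangle$ intrinsically one must solve a fibrewise $\bar\partial^V$-equation with right-hand side built from $i_{\bar\partial^V\frac{\delta}{\delta z^\alpha}}u$ and take the minimal-norm solution on the compact fibre $\pi^{-1}(p)$. Carrying this out is where the operator $(1+\Box')^{-1}$ is forced upon us: the minimal solution is expressed through the Green operator of a $\bar\partial$-Laplacian, and the Bochner--Kodaira--Nakano identity relating $\Box'$ and $\Box''$ through the fibrewise curvature of $L$—which, by the very definition of an admissible metric, equals the fibre metric $\sqrt{-1}\partial\bar\partial_{\mathcal{X}/M}\phi>0$—shifts that Laplacian by the identity, turning $(\Box'')^{-1}$ into $(1+\Box')^{-1}$. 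I expect the delicate points to be the fibrewise integration by parts that isolates $c(\phi)$ from the vertical contributions and the precise matching of the shift so that the factor is $(1+\Box')^{-1}$ rather than $(\Box')^{-1}$; once these are in place, formula (\ref{cur}) follows, and in particular its second term is manifestly nonnegative, recovering Berndtsson's positivity. The computation parallels \cite[Theorem 1.2]{Bern4} and the curvature formalism of \cite{Bern2}.
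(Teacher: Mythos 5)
Your proposal follows essentially the same route as the paper's proof (which is itself Berndtsson's): represent a local holomorphic section of $E$ by an honest section on the total space, compute $\sqrt{-1}\p\b{\p}\|u\|^2$ as a fibre integral using the decomposition of Lemma \ref{lemma1} to isolate $c(\phi)$, identify the obstruction term via $i_{\b{\p}^V\frac{\delta}{\delta z^{\alpha}}}u$ and the minimal solution of the fibrewise $\b{\p}$-equation, and use the Bochner--Kodaira--Nakano identity (with fibre metric equal to the curvature of $L$) to turn $(\Box'')^{-1}$ into $(1+\Box')^{-1}$. All the key ingredients and their roles match the paper's argument, so the plan is correct as an outline of that proof.
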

\begin{proof}
For any local holomorphic section $u$ of $E$,  following Berndtsson,  
\begin{align}\label{5.1}
	\b{\p}\textbf{u}=dz^{\alpha}\wedge \eta_{\alpha},\quad  D'u=(\Pi_{holo} \mu_{\alpha})dz^{\alpha}
\end{align}
where $\p^{\phi}\textbf{u}=dz^{\alpha}\wedge \mu_{\alpha}$, $\p^{\phi}=e^{\phi}\p e^{-\phi}$, $\Pi_{holo}$ is the projection on the space of holomorphic sections.	

By a direct computation, we have
\begin{align}\label{5.2}
\begin{split}
\sqrt{-1}\p\b{\p}\|u\|^2&=\sqrt{-1}\p\b{\p}\pi_*((\sqrt{-1})^{n^2}\textbf{u}\wedge\o{\textbf{u}}e^{-\phi})\\
&=-	\pi_*(\p\b{\p}\phi(\sqrt{-1})^{n^2}\textbf{u}\wedge\o{\textbf{u}}e^{-\phi})+\pi_*((\sqrt{-1})^{n^2}\mu_{\alpha}\wedge\o{\mu_{\beta}} e^{-\phi})\sqrt{-1}dz^{\alpha}\wedge d\b{z}^{\beta}\\
&\quad +\pi_*((\sqrt{-1})^{n^2}\eta_{\alpha}\wedge \o{\eta}_{\beta}e^{-\phi})\sqrt{-1}dz^{\alpha}\wedge d\b{z}^{\beta}.
\end{split}
\end{align}

For any element $u$ of $E$, we can take a canonical representation \textbf{u} of $u$, $\textbf{u}=u'\delta v\otimes e$, where $u'$ is a local smooth function and holomorphic when restricting on each fiber, $\delta v=\delta v^1\wedge \cdots\wedge \delta v^n$ and $e$ is a local frame of $L$. Then 
\begin{align}\textbf{u}=e^{i_N}(u^0)\end{align}
where $N=\phi^{i\b{l}}\phi_{\b{l}\alpha}dz^\alpha\otimes \frac{\p}{\p v^i}$, $e^{i_N}=\sum_{k=0}^{\infty}\frac{i^k_N}{k!}$, $u^0=u'dv\otimes e$. So 
\begin{align}
\b{\p}\textbf{u}=\b{\p} e^{i_N}(u^0)=e^{i_N}(\b{\p} u^0+i_{\b{\p} N}u^0).
\end{align}
Comparing with (\ref{5.1}), one has 
\begin{align}
\begin{split}
	\b{\p}\textbf{u}&=e^{i_N}(i_{\b{\p} N}u^0)=-dz^{\alpha}\wedge e^{i_N}(i_{\b{\p}\frac{\delta}{\delta z^{\alpha}}}u^0), 
\end{split}
\end{align}
so $\eta_{\alpha}=-e^{i_N}(i_{\b{\p}\frac{\delta}{\delta z^{\alpha}}}u^0)$. 

When restricting on each fiber, $\eta_{\alpha}=-i_{\b{\p}^V\frac{\delta}{\delta z^{\alpha}}}u^0$ and so
\begin{align}
-\eta_{\alpha}\wedge \p\b{\p}\phi=i_{\b{\p}^V\frac{\delta}{\delta z^{\alpha}}}u^0\wedge \p\b{\p}\phi=-u^0\wedge \b{\p}^2\phi_{\alpha}=0,
\end{align}
i.e. $\eta$ is primitive on each fiber. It follows that 
\begin{align}\label{5.3}
\pi_*((\sqrt{-1})^{n^2}\eta_{\alpha}\wedge \o{\eta}_{\beta}e^{-\phi})\sqrt{-1}dz^{\alpha}\wedge d\b{z}^{\beta}=-\langle\eta_{\alpha},\eta_{\beta}\rangle,
\end{align}
where $\langle\bullet,\bullet\rangle$ denotes the Hermitian inner induced by the Hermitian line bundle $(L,\phi)$ and $(\pi^{-1}(p),\p\b{\p}\phi|_{\pi^{-1}(p)})$. 

Moreover, if the holomorphic section $u$ of $E$ satisfies $D'u=0$ at the point $p$, that is, $\mu_{\alpha}$ is orthogonal to holomorphic forms. When restricting on the center fiber $\pi^{-1}(p)$, 
\begin{align}
dz^{\alpha}\wedge\b{\p}\mu_{\alpha}=-\b{\p}\p^{\phi}\textbf{u}=\p^{\phi}\b{\p}\textbf{u}=-dz^{\alpha}\wedge \p^{\phi}\eta_{\alpha}=-dz^{\alpha}\wedge \n'\eta_{\alpha},
\end{align}
so $\b{\p}\mu_{\alpha}=-\n'\eta_{\alpha}$. Since $\mu_{\alpha}$ is the $L^2$ minimal solution, so 
\begin{align}
\mu_{\alpha}=-\b{\p}^*(\Box'')^{-1}\n'\eta_{\alpha}.
\end{align}
Then 
\begin{align}\label{5.4}
\begin{split}
\langle\eta_{\alpha},\eta_{\beta}\rangle-\langle \mu_{\alpha},\mu_{\beta}\rangle &=\langle\eta_{\alpha},\eta_{\beta}\rangle-\langle-\b{\p}^*(\Box'')^{-1}\n'\eta_{\alpha},\mu_{\beta}\rangle\\
&=\langle\eta_{\alpha},\eta_{\beta}\rangle-\langle(\Box'+1)^{-1}\n'\eta_{\alpha},\n'\eta_{\beta}\rangle\\
&=\langle\eta_{\alpha},\eta_{\beta}\rangle-\langle(\Box'+1)^{-1}\Box'\eta_{\alpha},\eta_{\beta}\rangle\\
&=\langle(\Box'+1)^{-1}\eta_{\alpha},\eta_{\beta}\rangle.
\end{split}
\end{align}

On the other hand, by Lemma \ref{lemma1}, we have
\begin{align}\label{5.5}
\sqrt{-1}\pi_*(\p\b{\p}\phi(\sqrt{-1})^{n^2}\textbf{u}\wedge\o{\textbf{u}}e^{-\phi})=\int_{\pi^{-1}(p)}c(\phi)|u|^2 e^{-\phi}.
\end{align}

For any element $u$ of $E_p$, one can take a local holomorphic extension of $u$ and such that $D'u=0$ at $p$. From (\ref{5.2}), (\ref{5.3}), (\ref{5.4}) and (\ref{5.5}),  one has at the point $p$, 
\begin{align}
\begin{split}
\langle\sqrt{-1}\Theta^E u,u\rangle &=-\sqrt{-1}\p\b{\p}\|u\|^2\\
&=\int_{\pi^{-1}(p)}c(\phi)|u|^2e^{-\phi}+\langle(1+\Box')^{-1}i_{\b{\p}^V\frac{\delta}{\delta z^{\alpha}}}u,i_{\b{\p}^V\frac{\delta}{\delta z^{\beta}}}u\rangle\sqrt{-1}dz^{\alpha}\wedge d\b{z}^{\beta},
\end{split}
\end{align}
which completes the proof.
\end{proof}

Taking trace to both sides of (\ref{cur}) and by the semipositivity of the second term in the right hand side of (\ref{cur}), one has
\begin{align}\label{cur1}
  g^{\alpha\b{\beta}}\langle \Theta^E_{\alpha\b{\beta}} u,u\rangle \geq \int_{\pi^{-1}(p)} tr_{\omega}c(\phi)|u|^2e^{-\phi}.
\end{align}
From this we have the following proposition.
\begin{prop}\label{thm1} If $L$ admits a geodesic-Einstein metric $\phi$, then one has
\begin{align}
\frac{\deg E}{\text{\rm rank} E}\geq{{\rm Vol_\omega(M)}\over{2\pi m}}\lambda_{\mc{X},L},
\end{align}		
where $\deg E:=(c_1(E)\wedge[\omega]^{m-1})[M]$, ${\rm Vol_\omega(M)}=\omega^m[M]$.
\end{prop}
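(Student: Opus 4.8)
The plan is to integrate the pointwise curvature inequality (\ref{cur1}) over $M$ against the K\"ahler form and then interpret both sides topologically. First I would recall that $\deg E$ can be computed from the trace of the curvature of the Chern connection on $E$ with respect to the $L^2$ metric: by Chern-Weil theory, $\deg E = (c_1(E)\wedge[\omega]^{m-1})[M]$ is, up to the universal constant $\frac{1}{2\pi}$, equal to $\int_M \mathrm{tr}(\sqrt{-1}\Theta^E)\wedge\frac{\omega^{m-1}}{(m-1)!}$, which in turn equals $\frac{1}{2\pi}\int_M g^{\alpha\b\beta}\mathrm{tr}\langle\Theta^E_{\alpha\b\beta}\,\cdot\,,\,\cdot\,\rangle\,\frac{\omega^m}{m!}$ after contracting with the metric. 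The key is to express this trace as a sum over an orthonormal frame $\{u_s\}$ of $E_p$ and apply (\ref{cur1}) to each $u_s$.

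The main steps, in order, are as follows. First, fix $p\in M$ and choose a local orthonormal frame $u_1,\dots,u_r$ of $E$ near $p$ (with $r=\mathrm{rank}\,E$). Contracting (\ref{cur}) with $g^{\alpha\b\beta}$ and summing over $s$ gives
\begin{align*}
\sum_{s=1}^r g^{\alpha\b\beta}\langle\Theta^E_{\alpha\b\beta}u_s,u_s\rangle \geq \sum_{s=1}^r\int_{\pi^{-1}(p)} tr_\omega c(\phi)\,|u_s|^2 e^{-\phi},
\end{align*}
where the inequality uses the semipositivity of the second term on the right of (\ref{cur}), applied to each $u_s$. Second, since $\phi$ is geodesic-Einstein, $tr_\omega c(\phi)=\lambda=\lambda_{\mc X,L}$ is constant, so the right side becomes $\lambda\sum_s\int_{\pi^{-1}(p)}|u_s|^2e^{-\phi}=\lambda\sum_s\|u_s\|^2=\lambda\, r$ by the definition (\ref{L2 metric}) of the $L^2$ norm and orthonormality. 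The left side is precisely $\mathrm{tr}(g^{\alpha\b\beta}\Theta^E_{\alpha\b\beta})$, the $\omega$-trace of the mean curvature. Third, I would integrate this pointwise inequality over $M$ against $\frac{\omega^m}{m!}$ and use Chern-Weil to identify $\frac{1}{2\pi}\int_M \mathrm{tr}(g^{\alpha\b\beta}\Theta^E_{\alpha\b\beta})\frac{\omega^m}{m!}$ with $\deg E$, obtaining
\begin{align*}
2\pi\deg E \geq \lambda_{\mc X,L}\cdot r\cdot \mathrm{Vol}_\omega(M),
\end{align*}
which rearranges to the claimed slope inequality after dividing by $2\pi m\, r$.

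The step I expect to require the most care is the precise bookkeeping of constants relating the Chern-Weil integral to $\deg E$ and $\mathrm{Vol}_\omega(M)$, in particular tracking the factor of $m$ and the $2\pi$ normalization so that the final constant matches $\frac{\mathrm{Vol}_\omega(M)}{2\pi m}\lambda_{\mc X,L}$ exactly. The genuinely substantive input is already supplied by Theorem \ref{thm4} and the positivity of $(1+\Box')^{-1}$, so the analytic heart of the argument is inequality (\ref{cur1}); what remains is the normalization and the observation that the geodesic-Einstein condition turns the fiber integral of $tr_\omega c(\phi)$ into a constant multiple of the fiberwise $L^2$ norm. I would also note that the choice of orthonormal frame is only a local convenience: the trace is frame-independent, so the resulting global inequality is well defined, and no issue arises from the holomorphicity of the $u_s$ since the inequality (\ref{cur1}) holds for arbitrary elements of $E_p$.
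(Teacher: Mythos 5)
Your argument is essentially the paper's own proof: the paper likewise applies the trace of \eqref{cur1} with respect to the $L^2$ metric (using a general holomorphic frame and $h^{A\b{B}}$ rather than an orthonormal frame, which is the same computation), uses the geodesic-Einstein condition to replace $tr_\omega c(\phi)$ by the constant $\lambda_{\mc X,L}$, and integrates over $M$. The only blemish is the intermediate display, which should read $2\pi m\deg E\geq \lambda_{\mc X,L}\cdot r\cdot{\rm Vol}_\omega(M)$ (the factor of $m$ from $c_1(E)\wedge\omega^{m-1}=\frac{1}{m}tr_\omega(c_1(E))\,\omega^{m}$ is missing), after which dividing by $2\pi m\,r$ gives exactly the claimed inequality.
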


\begin{proof}
Let $\{u_A\}, 1\leq A\leq \text{rank}E$, be a local holomorphic frame of $E$, and  $h$ denote the $L^2$ metric on $E$. Set
$$h_{A\b{B}}=\langle u_A, u_B\rangle=\int_{\mc{X}_p}u_A \o{u_B}e^{-\phi}.$$
Since $\phi$ is geodesic-Einstein, then from (\ref{cur1}), one has for any $u=\sum_{A=1}^{\text{rank}E}a^A u_A\in E$ ,
  \begin{align*}
    g^{\alpha\b{\beta}}\Theta_{\alpha\b{\beta}A\b{B}}a^A\b{a}^B\geq \lambda_{{\mc X},L}h_{A\b{B}}a^A\b{a}^B,
  \end{align*}
and then by taking trace with respect to the $L^2$ metric $h$, one gets
  \begin{align*}
    g^{\alpha\b{\beta}}R_{\alpha\b{\beta}}:= g^{\alpha\b{\beta}}\Theta_{\alpha\b{\beta}A\b{B}}h^{A\b{B}}\geq \lambda_{{\mc X},L}\cdot\text{rank}E.
  \end{align*}
  Integrating both sides of the above inequality over $M$, one has
  \begin{align*}
    \lambda_{{\mc X},L}\leq \frac{\deg E}{\text{rank} E}\frac{2\pi m}{\rm Vol_\omega(M)},
  \end{align*}
from which the proposition follows.
\end{proof}
From Proposition \ref{thm1}, we have the following corollary.
\begin{cor}\label{ob3}
If $\phi$ is a geodesic-Einstein metric on $L$, then
 \begin{align}\label{ob2}
\frac{\deg E}{\text{rank} E}={{\rm Vol_\omega(M)}\over{2\pi m}}\lambda_{\mc{X},L}
\end{align}
if and only if
 the induced $L^2$ metric (\ref{L2 metric}) is a Hermitian-Einstein metric on $E=\pi_*(L+K_{\mc{X}/M})$.
\end{cor}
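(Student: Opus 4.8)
The plan is to recognize the Hermitian--Einstein equation for the $L^2$ metric $h$ directly inside the traced curvature formula of Theorem \ref{thm4}, and then to characterize the equality case of Proposition \ref{thm1} through the elementary fact that a positive semidefinite Hermitian endomorphism with vanishing trace must be zero.

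First I would introduce the mean curvature endomorphism $K:=g^{\alpha\b{\beta}}\Theta^E_{\alpha\b{\beta}}$ of $(E,h)$, whose $h$-trace is $g^{\alpha\b{\beta}}R_{\alpha\b{\beta}}$. Tracing (\ref{cur}) with $\omega$ and using that $\phi$ is geodesic-Einstein, so that $tr_{\omega}c(\phi)\equiv\lambda_{\mc{X},L}$, the first term on the right of (\ref{cur}) becomes $\lambda_{\mc{X},L}\int_{\pi^{-1}(p)}|u|^2e^{-\phi}=\lambda_{\mc{X},L}\langle u,u\rangle$. Thus for every $p\in M$ and $u\in E_p$,
\begin{align*}
\langle Ku,u\rangle=\lambda_{\mc{X},L}\langle u,u\rangle+g^{\alpha\b{\beta}}\langle(1+\Box')^{-1}i_{\b{\p}^V\frac{\delta}{\delta z^{\alpha}}}u,i_{\b{\p}^V\frac{\delta}{\delta z^{\beta}}}u\rangle,
\end{align*}
and the last term is nonnegative since $(g^{\alpha\b{\beta}})$ and $(1+\Box')^{-1}$ are positive. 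Hence the Hermitian endomorphism $K-\lambda_{\mc{X},L}\text{Id}_E$ is positive semidefinite, its defect being exactly this semipositive remainder.

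Next I would take the pointwise $h$-trace and integrate over $M$. Using the Chern--Weil identity $g^{\alpha\b{\beta}}R_{\alpha\b{\beta}}\,\frac{\omega^m}{m!}=2\pi\,c_1(E,h)\wedge\frac{\omega^{m-1}}{(m-1)!}$ exactly as in the proof of Proposition \ref{thm1}, I obtain
\begin{align*}
\int_M tr_h\!\left(K-\lambda_{\mc{X},L}\text{Id}_E\right)\frac{\omega^m}{m!}=\frac{2\pi\deg E}{(m-1)!}-\lambda_{\mc{X},L}\,\text{rank}\,E\,\frac{{\rm Vol}_\omega(M)}{m!}.
\end{align*}
The integrand is nonnegative by the first step, which is precisely Proposition \ref{thm1}; and the right-hand side vanishes exactly when (\ref{ob2}) holds. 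Therefore (\ref{ob2}) is equivalent to the vanishing of this integral, and since the integrand is continuous and nonnegative, to $tr_h(K-\lambda_{\mc{X},L}\text{Id}_E)\equiv 0$ on $M$.

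Finally I would upgrade this scalar identity to the full endomorphism equation. At each point $K-\lambda_{\mc{X},L}\text{Id}_E$ is a positive semidefinite Hermitian endomorphism of $E_p$ with zero trace, so it vanishes identically; equivalently $g^{\alpha\b{\beta}}\Theta^E_{\alpha\b{\beta}}=\lambda_{\mc{X},L}\text{Id}_E$, which is exactly the Hermitian--Einstein equation for $h$ with Einstein factor $\lambda_{\mc{X},L}$. Reading the chain of equivalences backwards yields the converse. The one step carrying real content is this last upgrade from the traced (scalar) equality to the endomorphism (Hermitian--Einstein) equality, and it is precisely here that the positivity built into Theorem \ref{thm4} is essential: it makes the defect $K-\lambda_{\mc{X},L}\text{Id}_E$ positive semidefinite, so that a vanishing trace forces the defect itself to vanish. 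The remaining manipulations are the same Chern--Weil bookkeeping already carried out in Proposition \ref{thm1}.
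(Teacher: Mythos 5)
Your proof is correct and follows essentially the same route as the paper, which simply says that equality in (\ref{ob2}) forces all the inequalities in the proof of Proposition \ref{thm1} to be equalities, whence $g^{\alpha\b{\beta}}\Theta_{\alpha\b{\beta}A\b{B}}=\lambda h_{A\b{B}}$. Your write-up is more explicit about the one nontrivial step --- that the defect $K-\lambda_{\mc{X},L}\mathrm{Id}_E$ is positive semidefinite by Theorem \ref{thm4}, so a vanishing integrated trace forces it to vanish pointwise --- which the paper leaves implicit.
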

\begin{proof}
  If (\ref{ob2}) holds, then all equalities in the proof of Proposition \ref{thm1} hold.  In particular, one has
  \begin{align}
    g^{\alpha\b{\beta}}\Theta_{\alpha\b{\beta}A\b{B}}= \lambda h_{A\b{B}},
  \end{align}
  that is, the induced $L^2$ metric (\ref{L2 metric}) on $E$ is a Hermitian-Einstein metric. The converse part is obvious.
\end{proof}
Now we consider the asymptotic behavior  of the corresponding quantities $\frac{\deg E^{(k)}}{\text{rank}E^{(k)}}$ when replacing $L$ by $L^k$ as $k\to \infty$. Firstly, by Bergman Kernel expansion, one has
\begin{align}\label{rank}
\text{rank}(\pi_*(L^k+K_{\mc{X}/M}))=\dim H^0(\mc{X}_y, L^k+K_{\mc{X}/M}|_y)=b_0 k^n+b_1k^{n-1}+O(k^{n-2}),
\end{align}
where
\begin{align}
	b_0:=\pi_*\left(\frac{c_1(L)^n}{n!}\right),\quad b_1=\pi_*\left(\frac{c_1(L)^{n-1}c_1(K_{\mc{X}/M})}{2(n-1)!}\right).
\end{align}

On the other hand, one has for any complex vector bundle $E$,
$$ch(E)=\text{rank}E+c_1+\frac{1}{2}(c_1^2-2c_2)+\frac{1}{6}(c_1^3-3c_1c_2+c_3)+\cdots$$
and
$$Td(E)=1+\frac{1}{2}c_1+\frac{1}{12}(c_1^2+c_2)+\frac{1}{24}c_1c_2+\cdots.$$
So from the Grothendieck-Riemann-Roch Theorem and Kodaira vanishing theorem, one gets
\begin{align}
\begin{split}
c_1(\pi_*(L^k+K_{\mc{X}/M}))&=c_1(\pi_!(L^k+K_{\mc{X}/M}))\\
&=\left\{\pi_*(ch(L^k+K_{\mc{X}/M})Td(T_{\mc{X}/M}))\right\}^{(1,1)}\\\nonumber
&=\pi_*\left(\frac{c_1(L)^{n+1}}{(n+1)!}\right)k^{n+1}+\pi_*\left(\frac{1}{2}\frac{c_1(L)^n}{n!}c_1(K_{\mc{X}/M})\right)k^n+O(k^{n-1}).
\end{split}
\end{align}
Now by setting $E^{(k)}:=\pi_*(L^k+K_{\mc{X}/M})$, one has
\begin{align}\label{expanding}
\deg E^{(k)}=(c_1(\pi_*(L^k+K_{\mc{X}/M}))[\omega]^{m-1})[M]=a_0k^{n+1}+a_1 k^n+O(k^{n-1}),
\end{align}
where
\begin{align}
a_0=\left(\frac{c_1(L)^{n+1}}{(n+1)!}[\omega]^{m-1}\right)[\mc{X}],\quad a_1=\left(\frac{1}{2}\frac{c_1(L)^n}{n!}c_1(K_{\mc{X}/M})[\omega]^{m-1}\right)[\mc{X}].
\end{align}

Hence, by (\ref{rank}) and (\ref{expanding}), one has
\begin{align}\label{Ek}
  \frac{\deg E^{(k)}}{\text{rank}E^{(k)}}=\frac{1}{n+1}\frac{(c_1(L)^{n+1}[\omega]^{m-1})[\mc{X}]}{c_1(L)^n[\mc{X}/M]}k
  +DF(\mc{X},L)+O(k^{-1}).
\end{align}
where
\begin{align}
DF(\mc{X},L):=\frac{a_1b_0-a_0b_1}{b_0^2},	
\end{align}
which can be viewed as an analogue of the Donaldson-Futaki invariant (cf. \cite{Don4}) on the obstruction for the existence of geodesic-Einstein metrics.

\begin{prop}\label{thm7}
  If $DF(\mc{X},L)<0$, then there exists no geodesic-Einstein metrics on $L$.
\end{prop}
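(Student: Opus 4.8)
The plan is to argue by contradiction, leveraging the asymptotic expansion in (\ref{Ek}) together with the inequality from Proposition \ref{thm1}. Suppose $DF(\mc{X},L)<0$ but that $L^k$ admits a geodesic-Einstein metric for infinitely many (in fact all large) $k$. The key observation is that the triple $(\mc{X},M,L^k)$ has the same underlying fibration as $(\mc{X},M,L)$, only with $L$ replaced by $L^k$, so all the constructions of Section 1 and Section 3 apply verbatim. In particular, if $\phi_k$ is a geodesic-Einstein metric on $L^k$, then Proposition \ref{thm1} gives
\begin{align}\label{PF7}
\frac{\deg E^{(k)}}{\text{\rm rank}\, E^{(k)}}\geq \frac{{\rm Vol}_\omega(M)}{2\pi m}\,\lambda_{\mc{X},L^k}.
\end{align}

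First I would compute $\lambda_{\mc{X},L^k}$ explicitly from (\ref{lamda}). Since $c_1(L^k)=k\,c_1(L)$, the numerator scales as $k^{n+1}$ and the denominator as $k^n$, so $\lambda_{\mc{X},L^k}=k\,\lambda_{\mc{X},L}$ exactly; hence the right-hand side of (\ref{PF7}) equals $\frac{{\rm Vol}_\omega(M)}{2\pi m}k\lambda_{\mc{X},L}$, which one checks against (\ref{lamda}) is precisely the leading-order coefficient of the expansion (\ref{Ek}) times $k$. That is, the right-hand side of (\ref{PF7}) matches the \emph{first} term of (\ref{Ek}) on the nose. Therefore subtracting this leading term from both sides of (\ref{PF7}) and using (\ref{Ek}) yields
\begin{align}\label{PF7b}
DF(\mc{X},L)+O(k^{-1})\;\geq\;0.
\end{align}

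Letting $k\to\infty$ in (\ref{PF7b}) forces $DF(\mc{X},L)\geq 0$, contradicting the hypothesis $DF(\mc{X},L)<0$. Hence no geodesic-Einstein metric can exist on $L$ (nor on any $L^k$ for large $k$), which is the assertion. The main obstacle — and the point that deserves care rather than being a routine calculation — is verifying that the leading-order term of $\frac{\deg E^{(k)}}{\text{\rm rank}\,E^{(k)}}$ really coincides with $\frac{{\rm Vol}_\omega(M)}{2\pi m}\lambda_{\mc{X},L^k}$ identically, so that the genuinely $O(1)$ term isolated is exactly $DF(\mc{X},L)$; this is where one must match the combinatorial constants ($n!$, $(n+1)!$, the factor $2\pi m$) coming from (\ref{lamda}), (\ref{rank}) and (\ref{expanding}) against each other. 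One subtlety worth flagging is that Proposition \ref{thm1} is stated for a geodesic-Einstein metric on $L$ itself; applying it to $L^k$ requires only that $L^k$ is again relatively ample with an admissible geodesic-Einstein metric, which holds since admissibility and the geodesic-Einstein equation are both stable under passing to positive tensor powers (the constant $\lambda$ simply rescales by $k$). Once the constant-matching is confirmed, the contrapositive is immediate.
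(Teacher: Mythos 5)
Your proof is correct and follows essentially the same route as the paper: both arguments note that a geodesic-Einstein metric $\phi$ on $L$ induces the geodesic-Einstein metric $k\phi$ on $L^k$ with $\lambda_{\mc{X},L^k}=k\lambda_{\mc{X},L}$, apply Proposition \ref{thm1} to $E^{(k)}$, and compare with the expansion (\ref{Ek}) to see that the sign of the $O(1)$ term $DF(\mc{X},L)$ is forced to be nonnegative. The paper phrases this as "the inequality of Proposition \ref{thm1} fails for $L^k$ when $k$ is large," while you subtract the leading term and let $k\to\infty$; these are the same argument, and your constant-matching check is exactly the identity $([\omega]^m c_1(L)^n)[\mc{X}]=\bigl(c_1(L)^n[\mc{X}/M]\bigr)\cdot{\rm Vol}_\omega(M)$ implicit in the paper's chain of equalities.
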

\begin{proof}
  By (\ref{Ek}), if $DF(\mc{X},L)<0$, one has
  $$\frac{\deg E^{(k)}}{\text{rank}E^{(k)}}<\frac{1}{n+1}\frac{(c_1(L)^{n+1}[\omega]^{m-1})[\mc{X}]}{c_1(L)^n[\mc{X}/M]}k
  =\frac{1}{n+1}\frac{(c_1(L^k)^{n+1}[\omega]^{m-1})[\mc{X}]}{c_1(L^k)^n[\mc{X}/M]}=\frac{\text{Vol}_{\omega}(M)}{2\pi m}\lambda_{\mc{X},L^k}$$
  for $k>0$ large enough. Therefore, by Proposition \ref{thm1}, there exists no geodesic-Einstein metric on $L^k$ and so on $L$. In fact, if $\phi$ is a geodesic-Einstein metric on $L$,
   then induced metric $k\phi$ on $L^k$ satisfies
  $$tr_{\omega}c(k\phi)=ktr_{\omega}c(\phi)=k\lambda,$$
  i.e. $k\phi$ is a geodesic-Einstein metric on $L^k$ and this yields a contradiction.
\end{proof}

\subsection{Projective bundles} For any holomorphic vector bundle $E\to M$ of rank $r$, let $\pi:P(E^*)\to M$ be the associated projective fibre bundle to the dual bundle $E^*\to M$, and $\mc{O}_{P(E^*)}(1)$ the hyperplane line bundle over $P(E^*)$. Clearly, the line bundle $\mc{O}_{P(E^*)}(1)\to P(E^*)$ is relative ample over the holomorphic fibration $\pi:P(E^*)\to M$.
Also by Lemma 5.37 in \cite{Shiff}, one knows that
\begin{align}\label{1.4}
  E=\pi_*(\mc{O}_{P(E^*)}(1)).
\end{align}
By Proposition 2.2 in \cite{Kob1}, one has
\begin{align}\label{1.5}
  K_{\mc{X}/M}=K_{P(E^*)/M}=\pi^*(\det E)-r\mc{O}_{P(E^*)}(1).
\end{align}
Set
\begin{align}\label{1.6}
  L=\mc{O}_{P(E^*)}(1)-K_{\mc{X}/M}=(r+1)\mc{O}_{P(E^*)}(1)-\pi^*(\det E),
\end{align}
which is also a relative ample line bundle over $P(E^*)$. Then
\begin{align}\label{direct image bundles}
  E=\pi_*(\mc{O}_{P(E^*)}(1))=\pi_*(L+K_{\mc{X}/M}).
\end{align}

By a direct computation, one has
\begin{align}
\begin{split}
  &\quad\frac{\pi_*(c_1(L)^{r})}{\pi_*(c_1(L)^{r-1})}=\frac{\pi_*(((r+1)c_1(\mc{O}_{P(E^*)}(1))-c_1(\det E))^{r})}{(r+1)^{r-1}\pi_*(c_1(\mc{O}_{P(E^*)}(1))^{r-1})}\\
  &=\frac{(r+1)^{r}\pi_*(c_1(\mc{O}_{P(E^*)}(1))^{r})-r(r+1)^{r-1}\pi_*(c_1(\mc{O}_{P(E^*)}(1))^{r-1})c_1(E)}{(r+1)^{r-1}\pi_*(c_1(\mc{O}_{P(E^*)}(1))^{r-1})}\\
  &=c_1(E).
  \end{split}
\end{align}
From above equality, one obtains that
\begin{align}\label{deg of E}
\begin{split}
	\frac{\deg E}{\text{rank}E}&=\frac{(c_1(L)^r\wedge[\omega]^{m-1})[P(E^*)]}{r(c_1(L)^{r-1})[P(E^*)/M]}\\
	&=\frac{\text{Vol}_{\omega}(M)}{2\pi m}\lambda_{P(E^*),L}.
\end{split}	
\end{align}

For any metric $\phi$ on $\mc{O}_{P(E^*)}(1)$, the first Chern class $c_1(E)$ can be written as (cf. \cite{FLW})
$$c_1(E)=\pi_* c_{1}(\mc{O}_{P(E^*)}(1))^{r}=\left[\left(\frac{1}{2\pi}\right)^r\pi_*(rc(\phi)(\sqrt{-1}\p\b{\p}\phi)^{r-1})\right].$$
Also by a conformal transformation (cf. \cite{Ko3}, Proposition 2.2.23), one can find some metric $\phi_E$ such that
$$c_1({\det E},\phi_E)=\left(\frac{1}{2\pi}\right)^r\pi_*(rc(\phi)(\sqrt{-1}\p\b{\p}\phi)^{r-1}).$$

When $\phi$ is geodesic-Einstein, we have
$$tr_{\omega}c_1(\det E,\phi_E)=\frac{1}{2\pi}rtr_{\omega}c(\phi)=\text{constant}.$$
So from (\ref{1.6}), the induced $ \phi_L=(r+1)\phi-\phi_E$ on $L$
is also geodesic-Einstein. In fact, since $\phi_E$ is independent of fibers, so
\begin{align}\label{geodesic L}
  tr_{\omega}c(\phi_L)=tr_{\omega}c((r+1)\phi-\phi_E)=(r+1)tr_{\omega}c(\phi)-tr_{\omega}c_1(\det E,\phi_E)=\text{constant}.
\end{align}
By (\ref{direct image bundles}), (\ref{deg of E}), (\ref{geodesic L}) and Corollary \ref{ob3}, we have
\begin{prop}\label{ob4}
  If $\mc{O}_{P(E^*)}(1)$ admits a geodesic-Einstein metric, then the induced $L^2$ metric  on $E$ is a Hermitian-Einstein metric.
\end{prop}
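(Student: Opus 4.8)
The plan is to reduce the assertion to Corollary \ref{ob3} applied to the triple $(P(E^*),M,L)$ with $L=(r+1)\mc{O}_{P(E^*)}(1)-\pi^*(\det E)$ as in (\ref{1.6}). Corollary \ref{ob3} yields a Hermitian-Einstein $L^2$ metric exactly when $L$ carries a geodesic-Einstein metric \emph{and} the numerical balance $\deg E/\text{rank}\,E=(\text{Vol}_\omega(M)/2\pi m)\lambda_{P(E^*),L}$ holds. The second condition costs nothing in this setting: it is the purely cohomological identity (\ref{deg of E}), which rests on $\pi_*(c_1(L)^r)/\pi_*(c_1(L)^{r-1})=c_1(E)$ and is independent of any metric. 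Thus the entire task is to convert the given geodesic-Einstein metric on $\mc{O}_{P(E^*)}(1)$ into a geodesic-Einstein metric on $L$.

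First I would fix the geodesic-Einstein metric $\phi$ on $\mc{O}_{P(E^*)}(1)$, so that $tr_\omega c(\phi)=\lambda$ is constant. Using the conformal transformation of \cite{Ko3} together with the fibre-integral expression for $c_1(E)$ from \cite{FLW}, I would choose a metric $\phi_E$ on $\det E$ with
$$c_1(\det E,\phi_E)=\left(\frac{1}{2\pi}\right)^r\pi_*\big(r\,c(\phi)(\sqrt{-1}\p\b{\p}\phi)^{r-1}\big),$$
and then set $\phi_L=(r+1)\phi-\phi_E$. By (\ref{1.6}) this is a metric on $L$, and it is admissible because $\phi_E$ is pulled back from $M$ and so contributes nothing to the fibrewise complex Hessian, leaving $\sqrt{-1}\p\b{\p}_{\mc{X}/M}\phi_L=(r+1)\sqrt{-1}\p\b{\p}_{\mc{X}/M}\phi>0$. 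Since $\phi_E$ is fibre-independent, its geodesic curvature coincides with its base curvature, whence $tr_\omega c(\phi_L)=(r+1)tr_\omega c(\phi)-tr_\omega c_1(\det E,\phi_E)$. The first term equals $(r+1)\lambda$; the second equals $\tfrac{r}{2\pi}tr_\omega c(\phi)$ by the trace identity underlying (\ref{geodesic L}), hence is also constant. Therefore $\phi_L$ is geodesic-Einstein on $L$.

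Finally I would note that by (\ref{direct image bundles}) we have $E=\pi_*(\mc{O}_{P(E^*)}(1))=\pi_*(L+K_{\mc{X}/M})$, so the $L^2$ metric (\ref{L2 metric}) built from $\phi_L$ is precisely the induced $L^2$ metric on $E$. Feeding the geodesic-Einstein metric $\phi_L$ together with the topological equality (\ref{deg of E}) into Corollary \ref{ob3} then forces this $L^2$ metric to be Hermitian-Einstein, which is the claim.

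The main obstacle is the middle step: producing $\phi_E$ with the prescribed curvature and verifying that the resulting $\phi_L$ is genuinely geodesic-Einstein. This hinges on the conformal-transformation construction and, crucially, on the trace identity $tr_\omega c_1(\det E,\phi_E)=\tfrac{r}{2\pi}tr_\omega c(\phi)$, which must be established carefully using that the fibre dimension is $r-1$ and the horizontal-vertical splitting of Lemma \ref{lemma1}. Once this is in place, the degree identity and the invocation of Corollary \ref{ob3} are formal.
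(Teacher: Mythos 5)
Your proposal is correct and follows essentially the same route as the paper: construct $\phi_E$ on $\det E$ via the conformal transformation so that $\phi_L=(r+1)\phi-\phi_E$ is a geodesic-Einstein metric on $L$, combine this with the cohomological identity (\ref{deg of E}), and invoke Corollary \ref{ob3} through $E=\pi_*(L+K_{\mc{X}/M})$. The paper packages these steps as the displayed computations preceding the proposition, so there is no substantive difference.
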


For a holomorphic vector bundle  $E\to M$, Kobayashi \cite{Ko1} established a correspondence between the Finsler metrics $G$ on $E$ and the Hermitian metrics $\phi$ on the  hyperplane line bundle
$\mc{O}_{P(E)}(1)\to P(E)$ with $\p\b\p\phi=\p\b\p\log G$ and defined the notion of the Finsler-Einstein metric on $E$ over a K\"{a}hler manifold manifold $M$ (more details, cf. \cite{Ko1}, \cite{FLW}, \cite{FLW1}). As applications, we have

\begin{lemma}\label{lemma2} A strongly pseudo-convex Finsler metric $G$ on $E$ is Finsler-Einstein if and only if the corresponding metric $\phi$ on $\mc{O}_{P(E)}(1)$
is geodesic-Einstein.
\end{lemma}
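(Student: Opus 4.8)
The plan is to show that, under Kobayashi's correspondence $\partial\bar\partial\phi=\partial\bar\partial\log G$, the mean geodesic curvature $\mathrm{tr}_\omega c(\phi)$ of the induced metric $\phi$ on $\mc{O}_{P(E)}(1)$ coincides with the mean Finsler curvature appearing in Kobayashi's Finsler--Einstein equation (up to the fixed normalization in his definition). Once this identity of functions on $M$ is established, the two constancy conditions become literally the same, and the equivalence follows from Definition \ref{GeoEin}. First I would fix, over a point of $M$, homogeneous fibre coordinates $\xi=(\xi^1,\dots,\xi^r)$ on $E$ together with the affine coordinates $v^i=\xi^i/\xi^r$ $(1\le i\le r-1)$ on the corresponding chart of $P(E)$, so that the relative dimension of $\pi\colon P(E)\to M$ is $n=r-1$; the Finsler metric induces the tautological metric on $\mc{O}_{P(E)}(-1)\subset\pi^*E$, whose dual weight is $\phi=\log G$ and satisfies $\partial\bar\partial\phi=\partial\bar\partial\log G$.

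The technical heart is a homogeneity computation. Since $G(z,\lambda\xi)=|\lambda|^2 G(z,\xi)$, one has the Euler identity $\xi^i\,\partial_{\xi^i}\log G=1$. Differentiating in $\bar\xi^j$ and in $\bar z^\beta$ yields
\[
\xi^i\,\partial_{\xi^i}\partial_{\bar\xi^j}\log G=0,\qquad \xi^i\,\partial_{\xi^i}\partial_{\bar z^\beta}\log G=0,
\]
so the vertical Hessian of $\log G$ in the homogeneous variables is degenerate, with kernel exactly the radial (Euler) direction spanned by $\xi$, and this same radial direction lies in the kernel of the mixed block $(\partial_{\xi^i}\partial_{\bar z^\beta}\log G)$. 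Passing to the splitting $(z,v,\xi^r)$, the $\log|\xi^r|^2$ part contributes nothing to $\partial\bar\partial\phi$, so the nondegenerate vertical Hessian $(\phi_{i\bar j})_{i,j=1}^{r-1}$ in the affine variables $v$ is precisely the reduction of the degenerate $r\times r$ vertical Hessian in $\xi$ along the Euler kernel.

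The remaining step is a Schur-complement identity, which I would phrase intrinsically through Lemma \ref{lemma1}: the geodesic curvature
\[
c(\phi)=\bigl(\phi_{\alpha\bar\beta}-\phi_{\alpha\bar j}\,\phi^{i\bar j}\,\phi_{i\bar\beta}\bigr)\sqrt{-1}\,dz^\alpha\wedge d\bar z^\beta
\]
is nothing but the horizontal component of $\sqrt{-1}\,\partial\bar\partial\phi$ relative to the distribution determined by $\phi$ on the fibres of $P(E)\to M$. Using the two displayed homogeneity relations, I would verify that this affine Schur complement equals the analogous Schur complement formed in the homogeneous variables with the generalized inverse of the degenerate vertical Hessian taken on the orthogonal complement of $\xi$; since the radial direction lies in the kernel of both the vertical and the mixed blocks, it drops out of the reduction and the two expressions agree. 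This identifies $\mathrm{tr}_\omega c(\phi)=g^{\alpha\bar\beta}(\phi_{\alpha\bar\beta}-\phi_{\alpha\bar j}\phi^{i\bar j}\phi_{i\bar\beta})$ with Kobayashi's mean Finsler curvature, whence $\mathrm{tr}_\omega c(\phi)=\lambda$ if and only if $G$ is Finsler--Einstein.

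I expect the main obstacle to be exactly this last coordinate reduction: one must track the change of variables $\xi\mapsto(z,v,\xi^r)$ through the Schur complement and confirm that every term produced by the $\xi^r$-direction and by the nonholomorphic transition of the affine vertical frame is annihilated by the Euler identities $\xi^i\phi_{i\bar j}=0$ and $\xi^i\phi_{i\bar\beta}=0$. This bookkeeping is routine but delicate; once it is completed, the equivalence of the Finsler--Einstein and geodesic-Einstein equations is immediate.
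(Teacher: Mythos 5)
Your proposal is correct and follows essentially the same route as the paper: both pass from homogeneous to affine fibre coordinates, use the Euler/homogeneity identities to relate the affine vertical Hessian inverse to $G^{i\bar j}$ (the paper writes this "generalized inverse on the complement of the radial direction" out as an explicit formula for $(\log G)^{\bar b a}$), and conclude that $c(\phi)$ equals minus the Kobayashi curvature form $\Psi=K_{i\bar j\alpha\bar\beta}\frac{v^i\bar v^j}{G}\sqrt{-1}\,dz^\alpha\wedge d\bar z^\beta$, after which the equivalence is a comparison of definitions. The only loose end, shared with the paper, is that identifying the constancy of $\mathrm{tr}_\omega c(\phi)$ with the Finsler--Einstein condition requires the known equivalence (cited to \cite{FLW}) between the scalar condition $\mathrm{tr}_\omega\Psi=\lambda$ and Kobayashi's tensorial one.
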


\begin{proof} For any Finsler metric $G$ on $E$, let $\phi$ be the corresponding Hermitian metric on the line bundle $\mc{O}_{P(E)}(1)$, which is also an admissible metric on $\mc{O}_{P(E)}(1)$  (cf. \cite{Ko1}, \cite{FLW}, \cite{FLW1}). With respect to a holomorphic trivialization of $E\to M$, by the standard procedure, one gets a local homogeneous holomorphic coordinate system $\{[\zeta]|\zeta=(\zeta^1,\cdots,\zeta^{r})\neq 0\}$ on the fibres of $P(E)$.

For a point $p\in P(E)$, without loss of generality, we assume that $\zeta^1(p)\neq 0$ and denote
 $v^{a}=\frac{\zeta^{a}}{\zeta^{1}}$, $2\leq a\leq r$.
Then we have
$$\frac{\p}{\p\zeta^1}=-\frac{\zeta^a}{(\zeta^1)^2}\frac{\p}{\p v^a}\quad \frac{\partial}{\partial \zeta^{a}}=\frac{1}{\zeta^{1}}\frac{\partial}{\partial v^{a}},\quad 2\leq a\leq r.$$
Denote by $((\log G)^{ab})_{2\leq a,b\leq r}$ (resp. $(G^{i\b{j}})_{1\leq i,j\leq r}$) the inverse of the   matrix $\left(\frac{\p^2\log G}{\p v^a\b{v}^b}\right)_{2\leq a,b\leq r}$ (resp. $\left(\frac{\p^2 G}{\p\zeta^i\p\b{\zeta}^j}\right)_{1\leq i,j\leq r}$).
  Then
  $$(\log G)^{\bar{b}a}=\frac{G}{|\zeta^{1}|^{2}}\left(-\frac{\zeta^{a}}{\zeta^{1}}G^{\bar{b}1}+G^{\bar{b}a}
  +\frac{\bar{\zeta}^{b}\zeta^{a}}{|\zeta^{1}|^{2}}G^{\bar{1}1}
  -\frac{\bar{\zeta}^{b}}{\bar{\zeta}^{1}}G^{\bar{1}a}\right).$$
  By a direct computation, one gets
  \begin{align}\label{111}
  c(\phi)=\sqrt{-1}(\phi_{\alpha\b{\beta}}-\phi_{\alpha\b{b}}\phi^{\b{b}a}\phi_{a\b{\beta}})dz^{\alpha}\wedge d\b{z}^{\beta}=-\sqrt{-1} K_{i\b{j}\alpha\b{\beta}}\frac{v^i\b{v}^j}{G}dz^{\alpha}\wedge d\b{z}^{\beta}:=-\Psi,
  \end{align}
where $\Psi:=K_{i\b{j}\alpha\b{\beta}}\frac{v^i\b{v}^j}{G}$ is the Kobayashi curvature of the Finsler metric $G$ (cf. \cite{FLW}, (1.21)). Now  the lemma is follows directly from (\ref{111})
and the definitions of the Finsler-Einstein metric (cf. \cite{FLW}, Definition 3.1) and the geodesic-Einstein metric.
\end{proof}

\begin{thm}\label{Finsler} $E$ admits a Finsler-Einstein metric if and only if $E$ admits a Hermitian-Einstein metric.
\end{thm}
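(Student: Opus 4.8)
The plan is to prove the two implications separately, in both cases routing through the projective-bundle dictionary of Lemma \ref{lemma2} together with the $L^2$-metric results of the previous subsection. The $(\Leftarrow)$ direction is essentially formal, while $(\Rightarrow)$ carries the analytic content.

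First I would dispose of the implication that a Hermitian-Einstein metric yields a Finsler-Einstein metric. A Hermitian metric $h=(h_{i\bar j})$ is in particular a strongly pseudo-convex Finsler metric $G(\zeta)=h_{i\bar j}\zeta^i\bar\zeta^j$, so by Lemma \ref{lemma2} it suffices to check that the corresponding metric $\phi$ on $\mathcal{O}_{P(E)}(1)$ is geodesic-Einstein. By (\ref{111}) this reduces to computing $tr_\omega c(\phi)=-tr_\omega\Psi$, and for a Hermitian $G$ the Kobayashi curvature $K_{i\bar j\alpha\bar\beta}$ collapses to the Chern curvature of $(E,h)$. The Hermitian-Einstein condition $g^{\alpha\bar\beta}K_{i\bar j\alpha\bar\beta}=\lambda h_{i\bar j}$ then gives $tr_\omega\Psi=\lambda\, h_{i\bar j}\zeta^i\bar\zeta^j/G=\lambda$, a constant; hence $\phi$ is geodesic-Einstein and $G$ is Finsler-Einstein.

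For the converse I would start from a Finsler-Einstein metric $G$ on $E$ and apply Lemma \ref{lemma2} to produce a geodesic-Einstein metric $\phi$ on $\mathcal{O}_{P(E)}(1)$. I then feed this into the chain Proposition \ref{thm1} $\to$ Corollary \ref{ob3} $\to$ Proposition \ref{ob4}. The crucial observation is that for the special fibration at hand the topological identity (\ref{deg of E}) forces $\deg E/\mathrm{rank}\,E=\mathrm{Vol}_\omega(M)\,\lambda_{\mathcal{X},L}/(2\pi m)$, that is, the estimate of Proposition \ref{thm1} is automatically saturated. By the equality criterion of Corollary \ref{ob3}, this is precisely the statement that the induced $L^2$-metric on the direct image $\pi_*\mathcal{O}(1)$ is Hermitian-Einstein, and by (\ref{direct image bundles}) this direct image is $E$ itself. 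Thus $E$ admits a Hermitian-Einstein metric.

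The main obstacle — and the reason the theorem is not purely formal — lies in passing from the inequality (\ref{cur1}) to the Hermitian-Einstein conclusion: the semipositivity of the second term in Berndtsson's curvature formula (\ref{cur}) yields only an inequality, and it is the automatic collapse of this inequality to an equality in the projective-bundle case, guaranteed by the degree computation (\ref{deg of E}), that does the real work. A secondary, purely bookkeeping, difficulty is the consistent matching of $E$ with $E^*$: Lemma \ref{lemma2} delivers geodesic-Einstein data on $\mathcal{O}_{P(E)}(1)$ whereas Proposition \ref{ob4} is phrased on $\mathcal{O}_{P(E^*)}(1)$, so in carrying out the converse one must keep track of the identification $\pi_*\mathcal{O}_{P(E)}(1)=E^*$ and use that $E$ is Hermitian-Einstein (resp.\ Finsler-Einstein) if and only if $E^*$ is, in order to line up the two halves of the argument.
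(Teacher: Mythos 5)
Your proposal is correct and follows essentially the same route as the paper: the easy direction is the observation that a Hermitian-Einstein metric is Finsler-Einstein, and the converse runs through Lemma \ref{lemma2} to get a geodesic-Einstein metric on $\mc{O}_{P(E)}(1)$, then through Proposition \ref{ob4} (which the paper derives exactly as you describe, from the saturation of Proposition \ref{thm1} via the degree identity (\ref{deg of E}) and the equality criterion of Corollary \ref{ob3}), landing on a Hermitian-Einstein $L^2$ metric on $E^*$ and dualizing. Your closing remark on the $E$ versus $E^*$ bookkeeping matches precisely how the paper's proof concludes.
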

\begin{proof}
 One direction is obvious, since a Hermitian-Einstein metric must be a Finsler-Einstein metric.

 Conversely, if $E$ admits a Finsler-Einstein metric $G$, then by Lemma \ref{lemma2}, the induced metric $\phi$ is a geodesic-Einstein metric on $\mc{O}_{P(E)}(1)$.  So from Proposition \ref{ob4}, the induced $L^2$ metric (\ref{L2 metric}) is a Hermitian-Einstein metric on $E^*$. Therefore, the dual Hermitian metric on $E$ is also a Hermitian-Einstein metric.
\end{proof}

Next, we will discuss some relations between the nonlinear stability of the holomorphic fibration $(P(E), M, \mc{O}_{P(E)}(1))$ and the stability of the holomorphic vector bundle $E$. 

\begin{prop}
\begin{enumerate}
  \item If $(P(E),M,\mc{O}_{P(E)}(1))$ is nonlinear semistable (resp. nonlinear stable), then $E$ is semistable (resp. stable).
  \item If $E$ is polystable, then $(P(E),M,\mc{O}_{P(E)}(1))$ is nonlinear polystable.
\end{enumerate}
\end{prop}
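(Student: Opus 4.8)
The plan is to reduce the nonlinear (semi/poly)stability of $(P(E),M,\mc{O}_{P(E)}(1))$ to the classical (semi/poly)stability of the vector bundle $E$, by matching the invariant $\lambda_{\mc{Y},L}$ of (\ref{l}) with ordinary slopes of subsheaves, and then to deduce part (2) from the Donaldson--Uhlenbeck--Yau theorem together with the machinery already established.

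For part (1), I would first express $\lambda_{P(E),L}$ in terms of the slope of $E$. Since $P(E)=(E\setminus\{0\})/\mb{C}^*$ parametrizes lines in $E$, writing $\xi=c_1(\mc{O}_{P(E)}(1))$ and $r=\mathrm{rank}\,E$ (so $\dim P(E)/M=r-1$), the Grothendieck relation gives $\pi_*(\xi^{r-1})=1$ and $\pi_*(\xi^{r})=-c_1(E)$. Substituting into (\ref{l}) yields
\[
\lambda_{P(E),L}=-\frac{2\pi m}{\mathrm{Vol}_\omega(M)}\,\mu(E),\qquad \mu(E):=\frac{\deg E}{\mathrm{rank}\,E},\quad \deg E=([\omega]^{m-1}c_1(E))[M].
\]
Next, for a nonzero subsheaf $F\subsetneq E$ I would pass to its saturation, which does not decrease the slope, so it suffices to test saturated subsheaves; then $E/F$ is torsion-free, hence $F$ is a subbundle outside a closed analytic set $S$ with $\mathrm{codim}\,S\geq 2$. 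The fibrewise linear inclusions $P(F_p)\subset P(E_p)$ over $M-S$ assemble into a sub-fibration $\mc{Y}=P(F|_{M-S})\in\mathscr{F}$ with $\dim\mc{Y}/M=\mathrm{rank}\,F-1$, and since $\mc{O}_{P(E)}(1)|_{P(F)}=\mc{O}_{P(F)}(1)$ the same pushforward computation (valid across $S$ because $\mathrm{codim}\,S\geq 2$) gives $\lambda_{\mc{Y},L}=-\frac{2\pi m}{\mathrm{Vol}_\omega(M)}\mu(F)$.

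With these two formulas the statement follows by sign bookkeeping. Nonlinear semistability (Definition \ref{stability}) requires $\lambda_{\mc{Y},L}\geq\lambda_{P(E),L}$ for every $\mc{Y}\in\mathscr{F}$; applying this to $\mc{Y}=P(F)$ gives $-\mu(F)\geq-\mu(E)$, i.e. $\mu(F)\leq\mu(E)$ for all (saturated, hence all) subsheaves $F$, which is exactly semistability of $E$. The strict inequalities yield the stable case verbatim. The delicate points, and the place I expect the real work, are getting the sign in the Grothendieck relation right and verifying the saturation step so that each $F$ genuinely produces an admissible sub-fibration with $\lambda_{\mc{Y},L}$ computed as claimed; the existence of sub-fibrations not of the form $P(F)$ is harmless here, since nonlinear semistability over all of $\mathscr{F}$ a fortiori gives the inequality on the subclass $\{P(F)\}$.

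For part (2), rather than constructing the filtration of Definition \ref{poly} by hand, I would invoke existence theory. If $E$ is polystable, the Donaldson--Uhlenbeck--Yau theorem furnishes a Hermitian--Einstein metric $h$ on $E$; the associated Finsler metric $G(v)=h(v,v)$ is strongly pseudo-convex and Finsler--Einstein (a Hermitian--Einstein metric is in particular Finsler--Einstein), so by Lemma \ref{lemma2} the corresponding admissible metric $\phi$ on $\mc{O}_{P(E)}(1)$ is geodesic-Einstein. Theorem \ref{thm2} then shows that $(P(E),M,\mc{O}_{P(E)}(1))$ is nonlinear polystable. Here the only substantive input is the external DUY theorem; everything else is formal given Lemma \ref{lemma2} and Theorem \ref{thm2}, so I expect the genuine obstacle to lie in part (1) rather than in part (2).
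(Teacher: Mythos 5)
Your proof is correct and takes essentially the same route as the paper: part (1) converts each coherent subsheaf $\mc{F}\subset\mc{O}(E)$ into the sub-fibration $P(\mc{F}|_{M-S})$ and identifies $\lambda_{P(F),\mc{O}_{P(F)}(1)}=-\frac{2\pi m}{\mathrm{Vol}_\omega(M)}\mu(\mc{F})$ exactly as the paper does, and part (2) combines Donaldson--Uhlenbeck--Yau with Lemma \ref{lemma2} and Theorem \ref{thm2} precisely as in the paper. Your explicit saturation step merely makes rigorous a point the paper leaves implicit.
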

\begin{proof}
\begin{enumerate}
\item For any coherent subsheaf $\mc{F}$ of $\mc{O}(E)$, $F=\mc{F}|_{M-S}$ is actually a holomorphic vector bundle over $M-S$ for some subvariety $S$ of $M$ of ${\rm codim}(S)\geq 2$. So $P(F)\to M-S$ is a subfibration of $P(E)\to M$. Then
\begin{align}\label{semistable inequality 1}
	\begin{split}
		\frac{\deg_{\omega}\mc{F}}{\text{rank}\mc{F}}&=\frac{\int_M c_1(F)\wedge\omega^{m-1}}{\text{rank}F}\\
		&=-\frac{([\omega]^{m-1}c_1(\mc{O}_{P(F)}(1))^{\text{rank}F})[P(F)]}{\text{rank}F}\\
		&=-\frac{\text{Vol}_{\omega}(M)}{2\pi m}\lambda_{P(F),\mc{O}_{P(F)}(1)}.
	\end{split}
\end{align}
If $(P(E),M,\mc{O}_{P(E)}(1))$ is nonlinear semistable (resp. nonlinear stable), then
\begin{align}\label{semistable inequality 2}
	\lambda_{P(F),\mc{O}_{P(F)}(1)}\geq (\text{resp.} >) \lambda_{P(E),\mc{O}_{P(E)}(1)}.
\end{align}
From (\ref{semistable inequality 1}) and (\ref{semistable inequality 2}), one has
\begin{align}
\frac{\deg_{\omega}\mc{F}}{\text{rank}\mc{F}}\leq (\text{resp.} <)\frac{\deg_{\omega}E}{\text{rank}E}.	
\end{align}
So $E$ is semistable (resp. stable).
\item If $E$ is a holomorphic ploystable vector bundle over $M$, by Donaldson-Uhlenbeck-Yau Theorem (cf. \cite{Yau}), there exists a Hermitian-Einstein metric on $E$. From Theorem \ref{Finsler}, Lemma \ref{lemma2} and Theorem \ref{thm2}, one knows that $(P(E),M,\mc{O}_{P(E)}(1))$ is nonlinear polystable.
\end{enumerate}
\end{proof}

\end{document}